\documentclass[12pt,reqno]{amsart}
\usepackage[british]{babel}
\usepackage{graphicx}
\usepackage{amsmath, amssymb, mathtools}
\usepackage{geometry, tikz, url, xcolor, adjustbox}
\usepackage{microtype}
\usetikzlibrary{arrows.meta, positioning, fit,backgrounds,calc}
 \usepackage{pdflscape}
\newcommand{\pqbinom}[3]{\binom{#1}{#2}_{\!#3}}

\newcommand{\ud}{\,\mathrm{d}}
\DeclareMathOperator{\Real}{Re}

\DeclareMathOperator{\Li}{Li}

\newtheorem{theorem}{Theorem}[section]
\newtheorem{proposition}[theorem]{Proposition}
\newtheorem{lemma}[theorem]{Lemma}
\newtheorem{corollary}[theorem]{Corollary}
\newtheorem*{theoremA}{Theorem A}
\theoremstyle{definition}

\theoremstyle{remark}

\numberwithin{equation}{section}

\title[Quartic limits for $(p,q)$-Rogers--Szeg\H{o}
  polynomials]{Quartic Fourier--Laplace limits for
  $(p,q)$-Rogers--Szeg\H{o} polynomials}

\begin{document}
\author{P. {\AA}hag} \address{Department
  of Mathematics and Mathematical Statistics, Ume{\aa} University,
  SE-901 87 Ume{\aa}, Sweden}
 \email{per.ahag@umu.se}

\author{R. Czy{\.z}}\address{Faculty of Mathematics and Computer
  Science, Jagiellonian University, \L ojasiewicza 6, 30-348 Krak\'ow,
  Poland}\email{rafal.czyz@im.uj.edu.pl}

\author{P. H. Lundow}
\address{Department of Mathematics and Mathematical Statistics,
  Ume{\aa} University, SE-901 87 Ume{\aa}, Sweden}
\email{per-hakan.lundow@umu.se}

\begin{abstract}
We prove asymptotic formulae for positive real
$(p,q)$-Rogers--Szeg\H{o} polynomials when the quadratic term in the
expansion about the middle coefficient vanishes.  For coefficient indices
whose distance from $n/2$ is of order $n^{3/4}$, the coefficient ratios have
a quadratic--quartic exponential limit.  A uniform bound valid for all
indices allows the ratios to be summed and gives locally uniform convergence
of the centred generating polynomials to a Fourier--Laplace integral.  We
also obtain an asymptotic formula for the sum of the coefficients,
convergence of all rescaled moments, weak convergence of the normalised
coefficient measures, and convergence with multiplicity of the zeros tending
to $w=1$.
\end{abstract}

\subjclass[2020]{Primary 33D45; Secondary 05A30, 41A60, 60F05, 30C15}
\keywords{$(p,q)$-binomial coefficients; Gaussian polynomials;
Rogers--Szeg\H{o} polynomials; coefficient asymptotics;
quartic critical scaling; Fourier--Laplace limits;
weak convergence of coefficient measures; local zero convergence}

\maketitle

\section{Introduction}\label{sec:intro}

The $q$-binomial coefficients, or Gaussian polynomials,
\[
\pqbinom{n}{k}{q}
=
\prod_{j=1}^{k}\frac{1-q^{n-k+j}}{1-q^j},
\qquad 0\le k\le n,
\]
are standard objects in $q$-series and enumerative combinatorics.  They
specialise to the ordinary binomial coefficients at $q=1$, count subspaces
over finite fields when $q$ is a prime power, and occur as rank-generating
functions for partitions in a rectangle
\cite{GasperRahman2004,DLMF17,StanleyEC1}.  We use their homogeneous
symmetric two-parameter form
\begin{equation}\label{eq:intro-pq-def}
  \pqbinom{n}{k}{p,q}
  :=
  \prod_{j=1}^{k}\frac{p^{\,n-k+j}-q^{\,n-k+j}}{p^j-q^j},
  \qquad 0\le k\le n,
\end{equation}
with the usual limiting interpretation when $p=q$.  If $p\ne0$ and
$r=q/p$, then
\begin{equation}\label{eq:intro-pq-reduction}
  \pqbinom{n}{k}{p,q}
  =p^{k(n-k)}\pqbinom{n}{k}{r}.
\end{equation}
Thus the two parameters separate into an explicit quadratic factor and a
one-parameter Gaussian coefficient.

The associated generating polynomial
\[
P_n(w;p,q)
:=
\sum_{k=0}^{n}\pqbinom{n}{k}{p,q}w^k
\]
is the $(p,q)$-Rogers--Szeg\H{o} polynomial.  In the one-parameter
case, Rogers--Szeg\H{o} polynomials are connected with theta functions
and orthogonality on the unit circle in Szeg\H{o}'s classical work
\cite{Szego1926}; Lubinsky and Saff used them in the analysis of
Pad\'e approximants to partial theta functions and their zero
distribution \cite{LubinskySaff1987}.  Corcino developed algebraic
identities for the $(p,q)$-coefficients, including recurrences,
generating functions, inverse relations, and orthogonality relations
\cite{Corcino2008}, while Jagannathan and Sridhar considered the
corresponding Rogers--Szeg\H{o} polynomial in connection with a
$(p,q)$-oscillator \cite{JagannathanSridhar2010}.  Asymptotic formulae
for $q$-shifted factorials as $q\to1$ were obtained by McIntosh
\cite{McIntosh1999}; a $q$-binomial probability law with a continuous
Stieltjes--Wigert limit was studied by Kyriakoussis and Vamvakari
\cite{KyriakoussisVamvakari2013}.

For positive real $p$ and $q$, the coefficients in
\eqref{eq:intro-pq-def} are positive and symmetric in $k$, and
therefore become a probability law after normalisation.  Lundow and
Rosengren used these laws as finite-size approximations to Ising
magnetisation distributions
\cite{LundowRosengren2010,LundowRosengren2013}.  The complete graph is
represented exactly by the limiting case $p=q$, and the positive
coefficient array is always either unimodal or bimodal for $0 < p,q<1$
by the theorem of Su and Wang \cite{SuWang2012}.  The transition
function relating the parameters to the locations of the modes was
analysed in \cite{AhagCzyzLundow2024}.  The parameter choice below
makes the quadratic term in the expansion about the middle coefficient
vanish at leading order.  The theorem concerns the
$(p,q)$-Rogers--Szeg\H{o} polynomial itself and does not assert that a
finite Ising magnetisation law is exactly $(p,q)$-binomial.

Our earlier paper \cite{AhagCzyzLundowComplexAsymptotics} gives a local
asymptotic formula for ratios of coefficients near $k=n/2$.  For positive
real parameters satisfying $2u=A(z)$, its Theorem~B
gives a quartic asymptotic when $k-n/2$ is of order $n^{3/4}$.  To sum these
ratios over $0\le k\le n$, an estimate outside this range is also required.
We prove such an estimate and then obtain
the locally uniform limit of the centred generating polynomial.  The weak
convergence and zero results follow from this limit.

For $z>0$ define
\begin{equation}\label{eq:intro-AB}
  A(z):=z\left(\coth\frac z4-1\right),
  \qquad
  B(z):=\frac{z^3\sinh(z/2)}{8\sinh^4(z/4)}.
\end{equation}
Then $B(z)>0$.  Fix $\gamma\in\mathbb R$, let $n$ tend to infinity through
even integers, and put
\begin{equation}\label{eq:intro-critical-params}
  \kappa:=\frac n2,
  \qquad
  u:=\frac{A(z)}2+\frac{\gamma}{\sqrt n},
  \qquad
  r:=e^{-z/n},
  \qquad
  p:=e^{-u/n},
  \qquad
  q:=p\, r.
\end{equation}
Write
\begin{equation}\label{def:cspz}
  C_{n,k}:=\pqbinom{n}{k}{p,q},
  \quad
  S_n:=\sum_{k=0}^nC_{n,k},
  \quad
  P_n(w):=\sum_{k=0}^{n}C_{n,k}w^k,
  \quad
  \mathcal Z_n(w):=w^{-\kappa}P_n(w).
\end{equation}
For the local coordinate $\zeta=n^{3/4}\log w$, set
\begin{equation}\label{eq:Gn-def}
  G_n(\zeta):=
  \frac{\mathcal Z_n(e^{\zeta/n^{3/4}})}
       {C_{n,\kappa}n^{3/4}},
  \qquad \zeta\in\mathbb C.
\end{equation}
Define the Fourier--Laplace transform
\begin{equation}\label{eq:intro-I}
  \mathcal I_{z,\gamma}(\zeta)
  :=
  \int_{-\infty}^{\infty}
  \exp\!\left(
  \zeta x + \gamma x^2
  -\frac{B(z)}{12}x^4
  \right)\ud x,
  \qquad \zeta\in\mathbb C.
\end{equation}

\begin{theoremA}
Under the preceding assumptions, $\mathcal I_{z,\gamma}$ is entire and
\begin{equation}\label{eq:intro-main-transform}
  G_n(\zeta)
       \longrightarrow
       \mathcal I_{z,\gamma}(\zeta)
\end{equation}
locally uniformly for $\zeta\in\mathbb C$ (see Fig.~\ref{figs} for an
illustration of this).  In particular, with $S_n:=\mathcal Z_n(1)$,
\begin{equation}\label{eq:intro-partition-limit}
  S_n=C_{n,\kappa}n^{3/4}\mathcal I_{z,\gamma}(0)(1+o(1))
\end{equation}
and
\begin{equation}\label{eq:intro-normalised-transform}
  \frac{G_n(\zeta)}{G_n(0)}
  \longrightarrow
  \frac{\mathcal I_{z,\gamma}(\zeta)}
       {\mathcal I_{z,\gamma}(0)}
\end{equation}
locally uniformly on $\mathbb C$.

Define probability measures $\nu_n$ by
\begin{equation}\label{eq:intro-nu-n}
  \nu_n:=
  \frac{1}{S_n}
  \sum_{\ell=-\kappa}^{\kappa}
  C_{n,\kappa+\ell}\,
  \delta_{\ell/n^{3/4}}.
\end{equation}
Here $\delta_x$ denotes the unit point mass at $x$. These measures
converge weakly to the probability measure
\begin{equation}\label{eq:intro-nu-limit}
  d\nu_{z,\gamma}(x)=
  \frac{1}{\mathcal I_{z,\gamma}(0)}
  \exp\!\left(
  \gamma x^2-\frac{B(z)}{12}x^4
  \right)\ud x.
\end{equation}
Moreover, if $U\subset\mathbb C$ is bounded and open and
$\partial U$ contains no zero of $\mathcal I_{z,\gamma}$, then, for all
sufficiently large even $n$, the functions
\[
\zeta\longmapsto\mathcal Z_n(e^{\zeta/n^{3/4}})
\quad\text{and}\quad
\zeta\longmapsto\mathcal I_{z,\gamma}(\zeta)
\]
have the same number of zeros in $U$, counted with multiplicity.  In
particular, if $s_0$ is a simple real zero of
$s\mapsto\mathcal I_{z,\gamma}(is)$, then there are real numbers
$s_n\to s_0$ such that
\[
P_n(e^{is_n/n^{3/4}})
=P_n(e^{-is_n/n^{3/4}})=0.
\]
\end{theoremA}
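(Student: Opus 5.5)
The plan is to work throughout with the ratios $R_{n,\ell}:=C_{n,\kappa+\ell}/C_{n,\kappa}$ for $|\ell|\le\kappa$. These are symmetric in $\ell$, and by definition
\[
G_n(\zeta)=n^{-3/4}\sum_{\ell=-\kappa}^{\kappa}R_{n,\ell}\,e^{\zeta\ell/n^{3/4}} .
\]
This is a Riemann sum with mesh $n^{-3/4}$ in the variable $x=\ell/n^{3/4}$. Two inputs are needed.

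\textbf{Step 1: local limit.} For each fixed $M$, uniformly in $|x|\le M$,
\[
R_{n,\ell}\to\exp\!\bigl(\gamma x^2-\tfrac{B(z)}{12}x^4\bigr).
\]
This is Theorem~B of \cite{AhagCzyzLundowComplexAsymptotics} under $2u=A(z)+2\gamma/\sqrt n$. To see it directly, use \eqref{eq:intro-pq-reduction} to write
\[
\log R_{n,\ell}=-\frac{u}{n}\bigl(k(n-k)-\kappa^2\bigr)+\log\frac{\binom{n}{k}_r}{\binom{n}{\kappa}_r},\qquad k(n-k)-\kappa^2=-\ell^2 .
\]
Expand the $q$-binomial ratio by Euler--Maclaurin as $n\varphi(\ell/n)+O(\text{lower order})$, with $\varphi$ even. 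The choice $u\approx A(z)/2$ cancels the $\ell^2/n$ term, the $\gamma/\sqrt n$ shift leaves $\gamma x^2$, and the quartic term $n\varphi^{(4)}(0)(\ell/n)^4/24$ gives $-B(z)x^4/12$.

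\textbf{Step 2: global bound.} This is the main obstacle. I want a dominating bound valid for all $0\le\ell\le\kappa$ and $n$ large:
\[
R_{n,\ell}\le C\exp\!\bigl(-c\,\min(\ell^4/n^3,\;\ell)\bigr).
\]
Even better would be $\exp\bigl(-c\,\ell^4/n^3\bigr)$ for $\ell\le\delta n$ and $\exp(-cn)$ beyond that. I would get it from the exact ratio of consecutive coefficients,
\[
\frac{C_{n,k+1}}{C_{n,k}}=\frac{p^{n-k}-q^{n-k}}{p^{k+1}-q^{k+1}},
\]
whose logarithm is a smooth function $h(k/n)$ plus $O(1/n)$ errors that can be controlled explicitly. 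Summing from $\kappa$ to $\kappa+\ell$ gives
\[
\log R_{n,\ell}=n\Phi_n(\ell/n)+O(\ell/n).
\]
Here $\Phi_n(t)=-ut^2+\int$ of an explicit log-sinh expression. The key analytic fact to prove is that the limiting $\Phi(t)=-A(z)t^2/2+\psi(t)$ satisfies $\Phi(t)\le -c\,t^4$ on $[0,1/2]$. I would try to show this by proving that $\Phi''$ is negative away from $0$, since $\Phi''(0)=0$ and $\Phi''$ behaves like $-B t^2$ near $0$. Concretely, write $\Phi''(t)$ in terms of $\coth$ of $z(1/2\pm t)$-type arguments and use convexity of $\coth$. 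The perturbation $\gamma\ell^2/n^{3/2}$ is absorbed: for $\ell\ge Kn^{3/4}$ it is dominated by $c\ell^4/n^3$, and for smaller $\ell$ it is bounded.

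\textbf{Step 3: conclusion.} Fix a compact set $|\zeta|\le \rho$. The terms satisfy
\[
\bigl|R_{n,\ell}\,e^{\zeta\ell/n^{3/4}}\bigr|\le C e^{\rho|x|-c\,\min(x^4,\;n^{3/4}|x|)} .
\]
Hence the tail of the Riemann sum over $|x|>M$ is small uniformly in $n$. This holds also in the range $\ell\sim n$, since there $\rho\ell/n^{3/4}\ll c\ell$. The part over $|x|\le M$ converges to $\int_{-M}^M$ by Step 1. Together these give \eqref{eq:intro-main-transform}. The same bound shows $\mathcal I_{z,\gamma}$ is entire, by differentiating under the integral sign or by Morera.

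Setting $\zeta=0$ gives \eqref{eq:intro-partition-limit}, and dividing gives \eqref{eq:intro-normalised-transform}. The moment generating functions of $\nu_n$ are $G_n(\zeta)/G_n(0)$ for real $\zeta$, and they converge in a neighbourhood of $0$. This yields weak convergence to $\nu_{z,\gamma}$ and also convergence of all moments. The zero statement is Hurwitz's theorem applied to $G_n\to\mathcal I_{z,\gamma}$, since $G_n$ is a nonzero multiple of $\zeta\mapsto\mathcal Z_n(e^{\zeta/n^{3/4}})$. For the last claim, $\mathcal I_{z,\gamma}(is)$ is real and even in $s$, and the coefficients are real and symmetric, so $\mathcal Z_n(e^{is/n^{3/4}})$ is real for real $s$. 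A simple sign change of the limit at $s_0$ then persists, and by the intermediate value theorem there is a real zero $s_n\to s_0$; the zero at $-s_n$ follows by symmetry.
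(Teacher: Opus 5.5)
Your proposal is correct and follows essentially the paper's route. The steps match: the local limit from Theorem~\ref{thm:coefficient-estimate}; a uniform quartic bound from the strict convexity of $h_z'(x)=z/(e^{zx}-1)$, where your negativity of $\Phi''(t)=2h_z'(1/2)-h_z'(1/2-t)-h_z'(1/2+t)$ via convexity of $\coth$ is the second-derivative form of the Jensen step in Lemma~\ref{lem:real-global-bound}; dominated Riemann sums; the continuity theorem (you use moment generating functions, the paper characteristic functions); and Rouch\'e/Hurwitz with the intermediate value theorem.

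Two slips in Step~2 should be repaired. First, the $p$-factor contributes $+u\ell^2/n$ to $\log R_{n,\ell}$, so the quadratic terms in $\Phi_n$ and $\Phi$ carry a plus sign; that sign is what makes $\Phi''(0)=0$. Second, the sum-to-integral error is not uniformly $O(\ell/n)$. The term $\log(1-e^{-z(n-k)/n})$ has a logarithmic singularity as $k\to n$, which gives an error of order $\log n$ near $\ell=\kappa$. This is harmless because $n\Phi(\ell/n)\le -cn/256$ for $\ell\ge n/4$, and that is exactly why the paper treats $y\ge 1/4$ separately.
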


The locally uniform convergence in Theorem~A is the principal result.  It
is proved as Theorem~\ref{thm:generating-series-real}; the coefficient-sum
and weak-convergence statements are Theorem~\ref{thm:real-partition} and
Corollary~\ref{cor:fourier-laplace-real}, and the assertion about zeros is
Corollary~\ref{cor:nearby-zeros-real}.

The exponent $3/4$ follows from the cancellation of the quadratic term
when $2u=A(z)$.  If $\ell=xn^{3/4}$, then the perturbation
$u-A(z)/2=\gamma n^{-1/2}$ contributes $\gamma x^2$, whereas the
first non-zero term contributes $-B(z)x^4/12$.  At a type-$k$
Curie--Weiss critical point the fluctuation scale is $n^{1-1/(2k)}$;
for $k=2$ this is $n^{3/4}$ \cite{EllisNewman1978}.  Related
fluctuation and large-deviation results for mean-field block-spin
models appear in \cite{KnopfelLoweSchubertSinulis2020}.  The limiting
density in \eqref{eq:intro-nu-limit} is unimodal for $\gamma\le0$ and
bimodal for $\gamma>0$, with modes at $\pm\sqrt{6\gamma/B(z)}$ in the
latter case.

The final assertion of Theorem~A concerns zeros for which
$\zeta=n^{3/4}\log w$ remains bounded.  It follows from locally uniform
convergence and Rouch\'e's theorem.  Lee--Yang theory studies zeros of
partition functions in a complex field \cite{LeeYang1952}.  Finite-size
Curie--Weiss and Ising zeros are also related to cumulants and large-deviation
statistics \cite{DegerFlindt2020,DegerBrangeFlindt2020}.  Kabluchko obtained
a global Lee--Yang zero distribution for the Curie--Weiss ferromagnet through
unitary Hermite polynomials and backward heat flow \cite{Kabluchko2025}.
Here we consider only zeros of the $(p,q)$-Rogers--Szeg\H{o} polynomial that
tend to $w=1$ at the rate stated in Theorem~A.

The paper is organised as follows.  Section~\ref{sec:coefficient-estimate}
contains the elementary $(p,q)$ reduction and the positive real coefficient
estimate from \cite{AhagCzyzLundowComplexAsymptotics}.
Section~\ref{sec:generating-series-real} proves the central-coefficient
asymptotic, the uniform quartic bound, the dominated Riemann-sum theorem,
Theorem~A, weak convergence, and convergence of zeros near $w=1$.
Section~\ref{sec:conclusion} gives the conclusion and some open problems.

\section{Elementary reductions and coefficient estimate}\label{sec:coefficient-estimate}

We state the elementary identities needed below and then state the only
near-central ratio estimate used from \cite{AhagCzyzLundowComplexAsymptotics}.

\begin{proposition}\label{prop:pq-reduction}
Let $p,q\in\mathbb C$ with $p\ne0$ and $p^j\ne q^j$ for $1\le j\le n$.
Set $r:=q/p$. Then, for every $0\le k\le n$,
\begin{equation}\label{eq:pq-factor}
  \pqbinom{n}{k}{p,q}=p^{\,k(n-k)}\pqbinom{n}{k}{r}.
\end{equation}
Moreover,
\begin{equation}\label{eq:pq-symmetry}
\pqbinom{n}{k}{p,q}=\pqbinom{n}{n-k}{p,q},
\qquad 0\le k\le n.
\end{equation}
Also,
\begin{equation}\label{eq:self-recip-polynomial}
  w^nP_n(w^{-1})=P_n(w),
\end{equation}
and the zeros of $P_n$ occur in reciprocal pairs, with multiplicity.
Finally, if $n$ is even then
\begin{equation}\label{eq:self-recip}
  \mathcal Z_n(w^{-1})=\mathcal Z_n(w).
\end{equation}
\end{proposition}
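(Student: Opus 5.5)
The plan is to prove each identity directly from the product definition \eqref{eq:intro-pq-def}, then derive the polynomial statements from the coefficient symmetry.

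\emph{Step 1: the factorisation \eqref{eq:pq-factor}.} For each $j$, pull $p^{\,n-k+j}$ out of the numerator and $p^j$ out of the denominator:
\[
\frac{p^{\,n-k+j}-q^{\,n-k+j}}{p^j-q^j}
=p^{\,n-k}\,\frac{1-r^{\,n-k+j}}{1-r^j}.
\]
Here $p\ne0$ makes the quotient legitimate. The hypothesis $p^j\ne q^j$ means $r^j\ne1$ for $1\le j\le n$, so every denominator $1-r^j$ is nonzero. Taking the product over $1\le j\le k$ gives the prefactor $p^{k(n-k)}$ times the Gaussian coefficient in $r$.

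\emph{Step 2: the symmetry \eqref{eq:pq-symmetry}.} The exponent $k(n-k)$ is invariant under $k\mapsto n-k$. So it suffices to know that the one-parameter Gaussian coefficient is symmetric. To see this, use the standard rewriting
\[
\pqbinom{n}{k}{r}=\frac{(r;r)_n}{(r;r)_k(r;r)_{n-k}},
\qquad (r;r)_m:=\prod_{i=1}^m(1-r^i).
\]
This follows by multiplying the numerator and denominator of the defining product by $(r;r)_{n-k}$. All factors are nonzero because $r^i\ne1$ for $i\le n$. The right-hand side is visibly symmetric in $k\leftrightarrow n-k$.

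\emph{Step 3: the polynomial statements.} For \eqref{eq:self-recip-polynomial}, compute
\[
w^nP_n(w^{-1})=\sum_k C_{n,k}w^{n-k},
\]
reindex with $k\mapsto n-k$, and apply \eqref{eq:pq-symmetry}. Note that $C_{n,0}=C_{n,n}=1$, since the empty product equals $1$ and symmetry then gives the top coefficient. Hence $P_n$ has degree exactly $n$ and $P_n(0)=1\ne0$.

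It follows that every zero $w_0$ is nonzero. If $w_0$ has multiplicity $m$, write $P_n(w)=(w-w_0)^mQ(w)$ with $Q(w_0)\ne0$. Then
\[
P_n(w)=w^nP_n(1/w)=(1-w_0w)^m\,w^{n-m}Q(1/w),
\]
so $1/w_0$ is a zero of multiplicity at least $m$. Applying the same argument to $1/w_0$ gives equality of multiplicities. Finally, for even $n$ with $\kappa=n/2$,
\[
\mathcal Z_n(w^{-1})=w^{\kappa}P_n(w^{-1})=w^{\kappa-n}P_n(w)=\mathcal Z_n(w).
\]

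There is no real obstacle. The only points needing care are that the hypotheses make all denominators nonzero, and the bookkeeping of multiplicities for reciprocal zeros, which relies on $P_n(0)\ne0$.
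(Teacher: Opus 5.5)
Your proposal is correct and follows the paper's proof essentially step by step. You factor $p^{\,n-k}$ out of each factor, get symmetry from the quotient $(r;r)_n/\bigl((r;r)_k(r;r)_{n-k}\bigr)$, read off $w^nP_n(w^{-1})=P_n(w)$ from coefficient symmetry, and use that the constant and leading coefficients equal $1$ for the reciprocal zeros. Your factorisation $P_n(w)=(1-w_0w)^m\,w^{n-m}Q(1/w)$ simply spells out the multiplicity claim, which the paper states without that detail.
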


\begin{proof}
Since $r=q/p$, each factor in \eqref{eq:intro-pq-def} satisfies
\[
\frac{p^{\,n-k+j}-q^{\,n-k+j}}{p^j-q^j}
=
\frac{p^{\,n-k+j}(1-r^{n-k+j})}{p^j(1-r^j)}
=
p^{\,n-k}\frac{1-r^{n-k+j}}{1-r^j}.
\]
Multiplication over $j=1,\ldots,k$ gives \eqref{eq:pq-factor}.  Also
\[
\pqbinom{n}{k}{r}=\frac{(r;r)_n}{(r;r)_k(r;r)_{n-k}},
\qquad
(r;r)_m:=\prod_{j=1}^{m}(1-r^j),
\]
with $(r;r)_0=1$. Hence
$\pqbinom{n}{k}{r}=\pqbinom{n}{n-k}{r}$.  The factor $p^{k(n-k)}$ is
also invariant under $k\mapsto n-k$, and this proves \eqref{eq:pq-symmetry}.
The polynomial identity \eqref{eq:self-recip-polynomial} is exactly the same
coefficient symmetry.  Since the constant and leading coefficients of $P_n$
are both one, zero is not a zero of $P_n$; hence every zero is accompanied by
its reciprocal, with the same multiplicity.  Multiplying
\eqref{eq:self-recip-polynomial} by $w^{-n/2}$ gives \eqref{eq:self-recip}.
\end{proof}

The next theorem is the near-central ratio estimate from
\cite{AhagCzyzLundowComplexAsymptotics}.  It is the positive real specialisation
of Theorem B of that paper, obtained by putting the imaginary parameter equal to zero.

\begin{theorem}\label{thm:coefficient-estimate}
Let $z>0$, let $\gamma\in\mathbb R$, and assume that $n$ is even. Put
$\kappa:=n/2$ and
\[
r:=e^{-z/n},
\qquad
u:=\frac{A(z)}2+\frac{\gamma}{\sqrt n},
\qquad
p:=e^{-u/n},
\qquad
q:=p\, r.
\]
For every fixed $L>0$, uniformly for all integer sequences $\ell_n$ such that
$x_n:=\ell_n/n^{3/4}$ satisfies $|x_n|\le L$,
\begin{equation}\label{eq:quartic-coefficient-estimate}
\frac{\pqbinom{n}{\kappa+\ell_n}{p,q}}
     {\pqbinom{n}{\kappa}{p,q}}
=
\exp\!\left(
\gamma x_n^2-\frac{B(z)}{12}x_n^4+O_{z,\gamma,L}(n^{-1/2})
\right).
\end{equation}
The same formula holds with $\kappa-\ell_n$ in place of $\kappa+\ell_n$.
See left panel of Fig.~\ref{figs} for an illustration of this ratio.
\end{theorem}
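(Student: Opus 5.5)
The plan is to reduce the ratio to an explicit finite product and then Taylor-expand its logarithm about the centre $t=1/2$. By Proposition~\ref{prop:pq-reduction}, $(\kappa+\ell)(n-\kappa-\ell)=\kappa^2-\ell^2$. Hence
\[
\frac{\pqbinom{n}{\kappa+\ell}{p,q}}{\pqbinom{n}{\kappa}{p,q}}
=p^{-\ell^2}\,\frac{(r;r)_\kappa^2}{(r;r)_{\kappa+\ell}(r;r)_{\kappa-\ell}}
=e^{u\ell^2/n}\prod_{j=1}^{\ell}\frac{1-r^{\kappa-\ell+j}}{1-r^{\kappa+j}}
\]
for $\ell\ge0$. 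The case $\kappa-\ell_n$ (equivalently $\ell<0$) then follows from the symmetry \eqref{eq:pq-symmetry}. Write $g(t):=\log(1-e^{-zt})$, so that $1-r^m=e^{g(m/n)}$. The logarithm of the product is
\[
\Sigma_\ell:=\sum_{j=1}^{\ell}\Bigl[g\bigl(\tfrac{\kappa-\ell+j}{n}\bigr)-g\bigl(\tfrac{\kappa+j}{n}\bigr)\Bigr].
\]
Since $g$ is real-analytic on a neighbourhood of $[1/2-\delta,1/2+\delta]$ and $|\ell|/n\le Ln^{-1/4}$, only Taylor expansions at a fixed interior point are needed.

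To evaluate $\Sigma_\ell$, I would set $t_0:=\tfrac12+\tfrac1{2n}$ and rewrite the two sums as sums of $g$ over the points $t_0+(i-\tfrac12)/n$ and $t_0-(i-\tfrac12)/n$ for $i=1,\dots,\ell$. Up to this $1/(2n)$ shift the two point sets are mirror images about $t_0$, so that $\Sigma_\ell=-\sum_{i}\bigl[g(t_0+s_i)-g(t_0-s_i)\bigr]$ up to bookkeeping. Expanding in odd powers gives
\[
g(t_0+s)-g(t_0-s)=2g'(t_0)s+\tfrac13g'''(t_0)s^3+O(s^5).
\]
Summing over the half-integer points $s_i=(i-\tfrac12)/n$ is exact for the power sums: $\sum_{i\le\ell}(i-\tfrac12)=\ell^2/2$, and $\sum_{i\le\ell}(i-\tfrac12)^3=\ell^4/4+O(\ell^2)$. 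This yields
\[
\Sigma_\ell=-g'(t_0)\frac{\ell^2}{n}-\frac{g'''(t_0)}{12}\frac{\ell^4}{n^3}+O\Bigl(\frac{\ell^2}{n^3}+\frac{\ell^6}{n^5}\Bigr).
\]
With $\ell=xn^{3/4}$ the error is $O(n^{-3/2}+n^{-1/2})=O(n^{-1/2})$. Replacing $t_0$ by $1/2$ costs $O(\ell^2/n^2)=O(n^{-1/2})$ in the quadratic term.

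The final step is to identify the constants. We have $g'(t)=z/(e^{zt}-1)$, so
\[
g'(1/2)=\frac{z}{e^{z/2}-1}=\frac z2\Bigl(\coth\frac z4-1\Bigr)=\frac{A(z)}2.
\]
Therefore the quadratic part of the log-ratio is
\[
\bigl(u-\tfrac{A(z)}2\bigr)\frac{\ell^2}{n}=\gamma x^2,
\]
exactly, because the $n^{1/2}x^2$ terms cancel. The quartic part is $-g'''(1/2)x^4/12$. The remaining check is the identity $g'''(1/2)=B(z)$. Differentiating $z/(e^{zt}-1)$ twice gives $z^3e^{zt}(e^{zt}+1)/(e^{zt}-1)^3$, which at $t=1/2$ should rewrite in terms of $\sinh(z/4)$ and $\sinh(z/2)$ as $z^3\sinh(z/2)/(8\sinh^4(z/4))$. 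All $O$-constants depend only on $z,\gamma,L$ through bounds on $g^{(5)}$ near $1/2$, which gives the stated uniformity.

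I expect the main obstacle to be the bookkeeping in the summation step. A naive Euler--Maclaurin comparison with an integral produces boundary terms of size $g'(1/2)\,\ell/n\sim n^{-1/4}$, which would spoil the $O(n^{-1/2})$ error. The point of centring at $t_0$ with half-integer offsets is to make these terms cancel exactly by symmetry. I would verify this cancellation carefully first, by writing out the index sets explicitly. If it fails, I would fall back on the exact representation $\Sigma_\ell=-\sum_{j=1}^{\ell}\sum_{m=0}^{\ell-1}\Delta$ as a double sum of second differences of $g$, and expand that double sum instead.
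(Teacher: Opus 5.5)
Your proof is correct. It takes a genuinely different route from the paper, which gives no argument for this statement and instead imports it as the positive real specialisation of Theorem~B of the authors' earlier complex-parameter paper.

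The step you flagged as risky is in fact exact, so no fallback is needed. Since $(\kappa+j)/n=t_0+(j-\tfrac12)/n$ and $(\kappa-\ell+j)/n=t_0-(\ell-j+\tfrac12)/n$, the two point sets are precisely $t_0\pm(i-\tfrac12)/n$, $1\le i\le\ell$. Also $\sum_{i\le\ell}(i-\tfrac12)^3=\ell^4/4-\ell^2/8$ exactly. The identity you left to check holds, using $\sinh(z/2)=2\sinh(z/4)\cosh(z/4)$:
\[
g'''(1/2)=\frac{z^3e^{z/2}(e^{z/2}+1)}{(e^{z/2}-1)^3}
=\frac{z^3\cosh(z/4)}{4\sinh^3(z/4)}=B(z).
\]
Incidentally, even a direct Euler--Maclaurin comparison would not suffer from the $n^{-1/4}$ boundary terms you feared. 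The trapezoidal endpoint corrections of the two sums both equal $\tfrac12 g'(1/2)\ell/n+O(\ell^2/n^2)$ and cancel in $\Sigma_\ell$.

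What your route buys is a self-contained, elementary proof in the real case with a sharper conclusion than stated. The $\gamma x_n^2$ term is exact, because $p^{-\ell^2}=e^{u\ell^2/n}$ is exact. The remaining error is $O_{z,L}(x_n^2n^{-1/2})$ with constants independent of $\gamma$, dominated by the shift $t_0\to1/2$ and the quintic Taylor remainder.

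Your argument is the natural refinement of the paper's own Lemma~\ref{lem:real-global-bound}. That lemma works with the same product $D_n(\ell)$ and the same constants $h_z'(1/2)=A(z)/2$, $h_z'''(1/2)=B(z)$. However, it only needs, and only proves, an $O_z(1)$ comparison with $nG_z(\ell/n)$. That is enough for domination but not for the local asymptotic.

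The paper's citation, for its part, obtains the estimate as a special case of a more general complex-parameter result. The price is relying on an external paper for a step that, as you show, is short and elementary in the positive real setting.
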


\section{Quartic summation and zeros near $w=1$}\label{sec:generating-series-real}

This section proves the summation and zero results for positive real
parameters.  The near-central coefficient estimate is
Theorem~\ref{thm:coefficient-estimate}.  Summation over all coefficients also
requires a uniform bound and a Riemann-sum argument.
Lemma~\ref{lem:central-coefficient-real} gives the asymptotic size of the
middle coefficient.  Lemma~\ref{lem:real-global-bound} gives the uniform
upper bound, and Lemma~\ref{lem:dominated-riemann} gives the required
Riemann-sum limit.  These results imply the Fourier--Laplace limit, weak
convergence, and convergence of zeros near $w=1$.

The following estimate is a self-contained Euler--Maclaurin evaluation
of the central $q$-binomial coefficient for $r=e^{-z/n}$. It is not
used as a new general asymptotic theory for $q$-binomial
coefficients. Its role is to give the absolute normalisation in
Theorem~\ref{thm:real-partition}. The normalised generating-series
limit in Theorem~\ref{thm:generating-series-real} only uses ratios and
the uniform bound from Lemma~\ref{lem:real-global-bound}. General
asymptotic results for $q$-shifted factorials as $q\to1$ were given by
McIntosh \cite{McIntosh1999}.  For fixed $0<q<1$, Kyriakoussis and
Vamvakari proved pointwise convergence of a $q$-binomial distribution
to a Stieltjes--Wigert law \cite{KyriakoussisVamvakari2013}. Scaling
formulas for $q$-binomial and $p,q$-binomial coefficients were also
obtained (non-rigorously) in \cite{LundowRosengren2013} but for a
different parametrization ($r=1+z/n$).

\begin{lemma}\label{lem:central-coefficient-real}
Let $z>0$, let $r=e^{-z/n}$, and assume that $n$ is even with
$\kappa=n/2$. Define
\begin{equation}\label{eq:H-def}
\mathcal H(z):=
\frac{\Li_2(e^{-z})-2\Li_2(e^{-z/2})+\pi^2/6}{z}
\end{equation}
and
\begin{equation}\label{eq:d-def}
d(z):=\left(\frac{z(1-e^{-z})}{2\pi(1-e^{-z/2})^2}\right)^{1/2}.
\end{equation}
Then
\begin{equation}\label{eq:central-qbinom-asymp}
\pqbinom{n}{\kappa}{r}
=
d(z)n^{-1/2}\exp\!\left(n\mathcal H(z)\right)\left(1+O_z(n^{-1})\right).
\end{equation}
Consequently, if $|p|=e^{-u/n}$, then
\begin{equation}\label{eq:central-pq-asymp}
\left|\pqbinom{n}{\kappa}{p,p r}\right|
=
d(z)n^{-1/2}
\exp\!\left(n\mathcal H(z)-\frac{u n}{4}\right)
\left(1+O_z(n^{-1})\right).
\end{equation}
\end{lemma}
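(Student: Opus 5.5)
Write
\[
\pqbinom{n}{\kappa}{r}=\frac{(r;r)_n}{(r;r)_\kappa^2},
\qquad
\log\pqbinom{n}{\kappa}{r}=F(n)-2F(\kappa),
\qquad
F(m):=\sum_{j=1}^{m}\log\bigl(1-e^{-jz/n}\bigr),
\]
and evaluate each $F$ by comparison with an integral. The summand $f(t)=\log(1-e^{-zt/n})$ is singular like $\log(zt/n)$ at $t=0$. I therefore split
\[
f(t)=\log\frac{zt}{n}+g(zt/n),
\qquad
g(s):=\log\frac{1-e^{-s}}{s}.
\]
Here $g$ is real analytic on $[0,\infty)$ with $g(0)=0$ and $g'(0)=-1/2$. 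The logarithmic part gives exactly $m\log(z/n)+\log m!$, which Stirling handles. The smooth part $\sum_{j=1}^m g(zj/n)$ is treated by Euler--Maclaurin with step $h=z/n$:
\[
\sum_{j=1}^m g(jh)=\frac1h\int_0^{mh}g(s)\ud s+\frac{g(mh)-g(0)}{2}+O(h),
\]
since $g'$ is bounded on $[0,z]$. Uniformity of this remainder is routine.

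Combining the three evaluations ($m=n$ once and $m=\kappa$ twice), the factorial parts give
\[
\log n!-2\log\kappa!+\bigl(n-2\kappa\bigr)\log(z/n)
=n\log2-\tfrac12\log(\pi n/2)+O(1/n),
\]
because $n=2\kappa$ and the central binomial coefficient asymptotics apply. The integral parts combine to
\[
\frac nz\Bigl(\int_0^z g-2\int_0^{z/2}g\Bigr).
\]
Rewriting $\int_0^x g=\int_0^x\log(1-e^{-s})\ud s-(x\log x-x)$, and using $\int_0^x\log(1-e^{-s})\ud s=\Li_2(e^{-x})-\pi^2/6$, the non-dilogarithmic pieces are linear in $x$ except $-z\log z+2\cdot\frac z2\log\frac z2=-z\log2$. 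This cancels the $n\log 2$ exactly. What remains is $n\mathcal H(z)$ with $\mathcal H$ as in \eqref{eq:H-def}. I will check this sign bookkeeping carefully; it is the main algebraic risk.

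The boundary terms give
\[
\tfrac12\bigl(g(z)-2g(z/2)\bigr)=\tfrac12\log\frac{(1-e^{-z})\cdot z/4}{z(1-e^{-z/2})^2}\cdot\text{(correction)}.
\]
Precisely, $\tfrac12\bigl[\log\frac{1-e^{-z}}{z}-2\log\frac{1-e^{-z/2}}{z/2}\bigr]=\tfrac12\log\frac{z(1-e^{-z})}{4(1-e^{-z/2})^2}$. Together with $-\tfrac12\log(\pi n/2)$ this yields
\[
\log d(z)-\tfrac12\log n.
\]
This matches \eqref{eq:d-def}, which is a good consistency check.

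The error is $O(h)=O(1/n)$. To get exactly $O(1/n)$ rather than worse, I use the second-order Euler--Maclaurin term $\frac h{12}\bigl(g'(mh)-g'(0)\bigr)$, which is explicitly $O(1/n)$, plus a remainder $O(h^2)$ times the sum. I expect this error control, together with the careful handling of the $t=0$ singularity via the splitting above, to be the only delicate step. The splitting makes it routine since $g$ is smooth up to $0$.

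Finally, \eqref{eq:central-pq-asymp} follows from \eqref{eq:pq-factor} with $k=\kappa$. This gives $|p^{\kappa^2}|=e^{-u\kappa^2/n}=e^{-un/4}$, and multiplying by the positive real value \eqref{eq:central-qbinom-asymp} gives the claim.
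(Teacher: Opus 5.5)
Your proof is correct and is essentially the paper's argument: you split off $\log(zx)$ so the remainder ($g(zx)$, the paper's $\eta_z(x)$) is smooth at $0$, apply Stirling to the factorial part and Euler--Maclaurin to the smooth part, and assemble the dilogarithm integrals; the only cosmetic difference is that you merge the factorials into $\log\binom{n}{n/2}$ before expanding, whereas the paper expands each $L_n(a,z)$ separately. One small correction: the first-order Euler--Maclaurin remainder bounded using only $\sup|g'|$ is $O(mh)=O(1)$, not $O(h)$, so the $O(1/n)$ does require the second-order step with $g''$ bounded (which you supply). Also, the garbled display with ``(correction)'' should simply be deleted in favour of your ``Precisely'' line.
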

\begin{proof}
For $a=1/2$ or $a=1$, define
\[
L_n(a,z):=\sum_{j=1}^{an}\log(1-e^{-zj/n}).
\]
Since $n$ is even, both sums are over integers. With
$(r;r)_m:=\prod_{j=1}^{m}(1-r^j)$,
\[
\pqbinom{n}{\kappa}{r}
=\frac{(r;r)_n}{(r;r)_\kappa^2},
\qquad
\log\pqbinom{n}{\kappa}{r}=L_n(1,z)-2L_n(1/2,z).
\]
Put
\[
\eta_z(x):=\log(1-e^{-zx})-\log(zx).
\]
Then $\eta_z$ extends smoothly to $x=0$ and $\eta_z(0)=0$. For fixed
$a\in\{1/2,1\}$,
\[
L_n(a,z)
=
an\log z-an\log n+\sum_{j=1}^{an}\log j
+
\sum_{j=1}^{an}\eta_z(j/n).
\]
Stirling's formula gives
\[
\sum_{j=1}^{an}\log j
=
an\log(an)-an+\frac12\log(2\pi an)+O_z(n^{-1}).
\]
Euler--Maclaurin applied to the smooth function $\eta_z$ gives
\[
\sum_{j=1}^{an}\eta_z(j/n)
=
n\int_0^a\eta_z(x)\ud x+\frac{\eta_z(a)-\eta_z(0)}2+O_z(n^{-1}).
\]
Combining these two formulas and using $\eta_z(0)=0$ yields
\begin{equation}\label{eq:Ln-asymp}
L_n(a,z)
=
n\int_0^a\log(1-e^{-zx})\ud x
+\frac12\log\!\left(\frac{2\pi n}{z}(1-e^{-za})\right)
+O_z(n^{-1}).
\end{equation}
Indeed, the logarithmic constant is
\[
\frac12\log(2\pi an)+\frac12\eta_z(a)
=
\frac12\log\!\left(\frac{2\pi n}{z}(1-e^{-za})\right).
\]
Also,
\[
\int_0^a\log(1-e^{-zx})\ud x
=
\frac{\Li_2(e^{-za})-\Li_2(1)}{z},
\qquad \Li_2(1)=\frac{\pi^2}{6}.
\]
Inserting $a=1$ and $a=1/2$ in \eqref{eq:Ln-asymp} gives
\eqref{eq:central-qbinom-asymp}. Finally,
\[
\left|\pqbinom{n}{\kappa}{p,p r}\right|
=|p|^{\kappa^2}\pqbinom{n}{\kappa}{r}
=e^{-u n/4}\pqbinom{n}{\kappa}{r},
\]
which gives \eqref{eq:central-pq-asymp}.
\end{proof}

Local estimates are not enough for sums over all $k$. The next lemma gives a
uniform bound up to the endpoints. This supplies domination for the partition
and generating-series limits.

\begin{lemma}\label{lem:real-global-bound}
Let $z>0$, let $r=e^{-z/n}$, and assume that $n$ is even. Write
$\kappa=n/2$ and
\[
u:=\frac{A(z)}2+\gamma n^{-1/2},
\qquad
p=e^{-u/n}.
\]
Put
\[
g_n(k):=\log\!\left(p^{k(n-k)}\pqbinom{n}{k}{r}\right).
\]
Then there are constants $C,c>0$, depending only on $z$ and $\gamma$, such
that for all sufficiently large even $n$ and all $|\ell|\le\kappa$,
\begin{equation}\label{eq:quartic-global-bound}
g_n(\kappa+\ell)-g_n(\kappa)
\le C+|\gamma|\frac{\ell^2}{n^{3/2}}-c\frac{\ell^4}{n^3}.
\end{equation}
\end{lemma}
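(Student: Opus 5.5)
The plan is to write the difference in \eqref{eq:quartic-global-bound} as a telescoping sum of one-step increments, compare that sum with $n$ times an integral, and show that the integrand has a sign forced by convexity exactly when $2u=A(z)$.

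\textbf{Reduction to increments.} By the symmetry \eqref{eq:pq-symmetry} it suffices to treat $0\le\ell\le\kappa$. For $k=\kappa+m$ the product formula for $\pqbinom{n}{k}{r}$ and the factor $p^{k(n-k)}$ give, with $\psi(x):=\log(1-e^{-x})$,
\[
g_n(k+1)-g_n(k)=\frac{u(2m+1)}{n}+\psi\!\Big(\frac{z(\kappa-m)}{n}\Big)-\psi\!\Big(\frac{z(\kappa+m+1)}{n}\Big).
\]
Put $t=m/n$ and $a=z/2$, and define
\[
h(t):=A(z)\,t-\phi(t),\qquad \phi(t):=\psi(a+zt)-\psi(a-zt),\qquad 0\le t<\tfrac12 .
\]
Since $u=A(z)/2+\gamma n^{-1/2}$, the increment equals $h(m/n)+2\gamma n^{-1/2}(m/n)+\varepsilon_{n,m}$. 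Here $\varepsilon_{n,m}$ collects the shifts by $1/n$ inside $\psi$ and the term $u/n$. Summing over $0\le m<\ell$, the $\gamma$-part gives $\gamma\ell(\ell-1)/n^{3/2}\le|\gamma|\ell^2/n^{3/2}$. The main part is compared with $n\int_0^{\ell/n}h$ by a monotone Riemann-sum comparison; $h$ is monotone, see below.

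\textbf{Key sign property.} I would next show that $h$ is decreasing with $h(0)=0$. The function $\psi'(x)=1/(e^x-1)$ is convex on $(0,\infty)$, since $\psi'''>0$. Hence
\[
\phi'(t)=z\big[\psi'(a+zt)+\psi'(a-zt)\big]\ge 2z\psi'(a)=\frac{2z}{e^{z/2}-1}=z\Big(\coth\frac z4-1\Big)=A(z).
\]
So $h'(t)=A(z)-\phi'(t)\le0$, with equality only at $t=0$. This is exactly where the criticality condition $2u=A(z)$ enters. Because $h$ is odd, a Taylor expansion gives $h(t)=-c_3t^3+O(t^5)$ with $c_3=\tfrac{z^3}{3}\psi'''(a)>0$, which should match $B(z)/3$. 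Put $\mathcal G(s):=\int_0^s h$. Then $\mathcal G(s)\le -c_0 s^4$ near $0$. Also $\mathcal G(s)<0$ on $(0,\tfrac12]$, and $\mathcal G$ stays finite at $\tfrac12$ because the singularity of $h$ there is only logarithmic. By compactness, $\mathcal G(s)\le -c\,s^4$ on $[0,\tfrac12]$.

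\textbf{Discretisation errors (main obstacle).} The delicate step is to show that $\sum_{m<\ell}(h(m/n)+\varepsilon_{n,m})\le n\mathcal G(\ell/n)+C'+\tfrac c2\,\ell^4/n^3$.

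For $m\le\kappa-n^{1-\delta}$, say, $\psi$ and its derivatives are bounded at the relevant arguments. There $\varepsilon_{n,m}=O(1/n)$, and the sum of these errors over $m<\ell$ is $O(\ell/n)=O(1)$. Since $h$ is decreasing, the left Riemann sum $\sum_{m<\ell}h(m/n)$ is at most $h(0)+n\int_0^{(\ell-1)/n}h\le n\mathcal G(\ell/n)+O(1)$.

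Near the endpoint, $\psi(z(\kappa-m)/n)\approx\log(z(\kappa-m)/n)$. Comparing the sum of these terms with the integral is a Stirling-type comparison, which costs at most $O(\log n)$. In that range $\ell\asymp n$, so $\ell^4/n^3\asymp n$ and the $O(\log n)$ loss is absorbed by halving $c$. The constant $C$ absorbs the bounded errors.

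I expect this endpoint bookkeeping to be the most technical part. The convexity argument above supplies the essential inequality.
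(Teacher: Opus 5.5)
Your argument is correct, and its analytic core is the paper's. Your $\mathcal G(s)=\int_0^s h$ is exactly the paper's $Q_z(y)=G_z(y)+\frac{A(z)}{2}y^2$. Your pointwise inequality $\phi'(t)\ge A(z)$ is the differentiated form of the paper's Jensen inequality $h_z(1/2+y)-h_z(1/2-y)\ge A(z)y$. The Taylor and compactness step giving $\mathcal G(s)\le -c s^4$ is the same, including $c_3=B(z)/3$.

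You differ in the discretisation, and your version is genuinely simpler.

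\textbf{The paper's route.} The paper pairs the $j$-th numerator and denominator factors in $D_n(\ell)$ and proves the two-sided estimate $D_n(\ell)=nG_z(\ell/n)+O_z(1)$ by Euler--Maclaurin for $\ell/n\le 1/4$. Near $\ell=\kappa$ it splits off $\log(zx)$, uses a log-factorial estimate, and absorbs the resulting $O(\log n)$ loss into the order-$n$ negative term.

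\textbf{Your route.} You telescope one-step increments, and only a one-sided bound is needed. With this, your ``main obstacle'' disappears. Because you take $t=m/n$, the singular term $\psi(a-zt)$ is evaluated exactly. The only shifted term is $\psi(a+zt+z/n)$, whose argument is at least $z/2$, and $\psi$ is increasing. Hence $\varepsilon_{n,m}\le u/n$ for every $m<\kappa$, not only for $m\le\kappa-n^{1-\delta}$. Incidentally, in that range $\psi(a-zt)$ is not bounded; it is of size about $\delta\log n$. The same imprecision affects your claim $n\mathcal G((\ell-1)/n)\le n\mathcal G(\ell/n)+O(1)$, which really costs $O(\log n)$ near the top of your range. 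This is harmless, but unnecessary.

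\textbf{The full argument without endpoint analysis.} Since $h$ is decreasing on all of $[0,1/2)$, for every $1\le\ell\le\kappa$ you have $\sum_{m<\ell}h(m/n)\le n\mathcal G((\ell-1)/n)\le -c(\ell-1)^4/n^3$. Moreover $c\bigl(\ell^4-(\ell-1)^4\bigr)/n^3\le 4c\ell^3/n^3\le c/2$. This gives the bound with $C=u/2+c/2$. No Stirling comparison, Euler--Maclaurin, or splitting into ranges is needed.

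Your route therefore gives a shorter, fully elementary proof of the upper bound, which is all the lemma asserts. The paper's route additionally yields the matching lower bound $D_n(\ell)\ge nG_z(\ell/n)-O_z(1)$ in the bulk, which is not used anywhere.
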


\begin{proof}
By symmetry, it is enough to consider $0\le \ell\le\kappa$. Put
\[
h_z(x):=\log(1-e^{-zx}),
\qquad 0<x\le1,
\qquad y:=\frac{\ell}{n}.
\]
The exact product formula gives
\[
D_n(\ell):=
\log\left(\frac{\pqbinom{n}{\kappa+\ell}{r}}
          {\pqbinom{n}{\kappa}{r}}\right)
=
\sum_{j=1}^{\ell}
\left[
 h_z\!\left(\frac{\kappa-\ell+j}{n}\right)
 -h_z\!\left(\frac{\kappa+j}{n}\right)
\right].
\]
The corresponding continuous exponent is
\[
G_z(y):=
\int_0^y\left(h_z(1/2-s)-h_z(1/2+s)\right)\,ds,
\qquad 0\le y\le 1/2.
\]
For $0<y<1/2$,
\[
h_z(1/2+y)-h_z(1/2-y)
=
\int_{-y}^{y}h_z'(1/2+s)\,ds,
\qquad
h_z'(x)=\frac{z}{e^{zx}-1}.
\]
Moreover,
\[
\frac{d^2}{dx^2}h_z'(x)=\frac{z^3e^{zx}(e^{zx}+1)}{(e^{zx}-1)^3}>0.
\]
Thus $h_z'$ is strictly convex, and Jensen's inequality gives
\[
h_z(1/2+y)-h_z(1/2-y)
\ge 2y h_z'(1/2)=A(z)y.
\]
After integration in $y$,
\[
Q_z(y):=G_z(y)+\frac{A(z)}{2}y^2\le0.
\]
The Taylor expansion at the origin gives
\[
Q_z(y)=-\frac{B(z)}{12}y^4+O_z(y^6),
\]
because $h_z'''(1/2)=B(z)$.  The strict convexity above gives
$Q_z(y)<0$ for $0<y\le1/2$, and $Q_z(y)/y^4$ extends continuously to
$y=0$ with value $-B(z)/12$. Hence there is $c_z>0$ such that
\begin{equation}\label{eq:G-quartic-bound}
G_z(y)+\frac{A(z)}{2}y^2\le -c_z y^4,
\qquad 0\le y\le1/2.
\end{equation}

We compare the sum with the integral. If $0\le y\le1/4$, all arguments of
$h_z$ stay in a compact subinterval of $(0,1]$, and Euler--Maclaurin gives
\[
D_n(\ell)=nG_z(y)+O_z(1).
\]
If $1/4\le y\le1/2$, write
\[
h_z(x)=\log(zx)+\eta_z(x),
\qquad
\eta_z(x)=\log(1-e^{-zx})-\log(zx).
\]
The function $\eta_z$ is smooth on $[0,1]$. Hence Euler--Maclaurin applies
uniformly to the $\eta_z$ part. For the logarithmic part one has the exact
identity
\[
\sum_{j=1}^{\ell}
\log\!\left(\frac{\kappa-\ell+j}{\kappa+j}\right)
=
\log\!\left(\frac{(\kappa!)^2}{(\kappa-\ell)!(\kappa+\ell)!}\right).
\]
The uniform log-factorial estimate
\[
\log m!=m\log m-m+O(\log(m+2)),\qquad m\ge0,
\]
with $0\log0:=0$, applied to $m=\kappa-\ell$, $m=\kappa$, and
$m=\kappa+\ell$, compares this expression with the corresponding integral
contribution to $nG_z(y)$ and gives an error $O_z(\log n)$.  In this range
\eqref{eq:G-quartic-bound} gives $Q_z(y)\le -\varepsilon_z$ for some
$\varepsilon_z>0$, so the logarithmic error is absorbed by the negative term of
order $n$.  Combining the two ranges, and changing the constants if
necessary, gives
\begin{equation}\label{eq:discrete-global-comparison}
D_n(\ell)+\frac{A(z)}{2}\frac{\ell^2}{n}
\le C_z-c_z\frac{\ell^4}{n^3},
\qquad 0\le\ell\le\kappa.
\end{equation}

Finally,
\[
g_n(\kappa+\ell)-g_n(\kappa)=D_n(\ell)+u\frac{\ell^2}{n},
\qquad
u=\frac{A(z)}2+\gamma n^{-1/2}.
\]
Together with \eqref{eq:discrete-global-comparison}, this gives
\[
g_n(\kappa+\ell)-g_n(\kappa)
\le C_z+\gamma\frac{\ell^2}{n^{3/2}}-c_z\frac{\ell^4}{n^3}
\le C_z+|\gamma|\frac{\ell^2}{n^{3/2}}-c_z\frac{\ell^4}{n^3}.
\]
The estimate for negative $\ell$ follows from
$\pqbinom{n}{\kappa+\ell}{r}=\pqbinom{n}{\kappa-\ell}{r}$.
\end{proof}

The summation arguments use the following elementary dominated Riemann-sum
lemma. It is stated separately to keep the proofs of the two limiting
statements short.

\begin{lemma}\label{lem:dominated-riemann}
Let $h_n\to0$ and let $I_n\subset\mathbb Z$ be intervals such that
$\sup I_n\,h_n\to\infty$ and $\inf I_n\,h_n\to-\infty$. Let
$H$ be a non-negative continuous integrable function on $\mathbb R$. Assume
that, for some $L_0>0$, both $x\mapsto H(x)$ and $x\mapsto H(-x)$ are
non-increasing on $[L_0,\infty)$. Assume that functions $F_n$ are
defined on the grid $\{j h_n:j\in I_n\}$ and that a continuous function
$F$ on $\mathbb R$ satisfies
\[
\sup_{\substack{j\in I_n\\ |j h_n|\le L}}|F_n(j h_n)-F(j h_n)|\to0
\]
for every fixed $L>0$, and
\[
|F_n(j h_n)|\le H(j h_n)
\qquad (j\in I_n).
\]
Then
\[
h_n\sum_{j\in I_n}F_n(j h_n)
\longrightarrow
\int_{-\infty}^{\infty}F(x)\ud x.
\]
Let $\Theta$ be a compact metric parameter space. If $F_{n,\theta}$ and
$F_\theta$ satisfy the preceding hypotheses with local convergence uniform in
$\theta\in\Theta$, if the same $H$ is valid for all $\theta\in\Theta$, and if
$(x,\theta)\mapsto F_\theta(x)$ is jointly continuous on
$\mathbb R\times\Theta$, then
\[
\sup_{\theta\in\Theta}
\left|
h_n\sum_{j\in I_n}F_{n,\theta}(j h_n)
-
\int_{-\infty}^{\infty}F_\theta(x)\ud x
\right|
\longrightarrow0.
\]
\end{lemma}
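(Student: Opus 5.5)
The plan is a standard truncation argument: a compact-window piece that converges by uniform convergence and Riemann sums, plus tails that are controlled by the monotone dominating function $H$. Fix $\varepsilon>0$ and choose $L\ge L_0$ so large that
\[
\int_{|x|\ge L-1}H(x)\ud x<\varepsilon .
\]
Since $|F|\le H$ by pointwise passage to the limit (see below), this also gives $\int_{|x|\ge L}|F|\,dx<\varepsilon$. Split
\[
h_n\sum_{j\in I_n}F_n(jh_n)=h_n\sum_{|jh_n|\le L}F_n(jh_n)+h_n\sum_{|jh_n|>L}F_n(jh_n).
\]
Because $\sup I_n\,h_n\to\infty$, $\inf I_n\,h_n\to-\infty$ and $I_n$ is an interval of integers, for large $n$ the grid $\{jh_n:j\in I_n\}$ contains every point $jh_n$ with $j\in\mathbb Z$ and $|jh_n|\le L$.

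On the compact window, the sum is compared with $h_n\sum_{|jh_n|\le L}F(jh_n)$. The difference is at most
\[
(2L/h_n+1)\,h_n\sup|F_n-F|\to0 .
\]
The comparison sum is a Riemann sum of the continuous function $F$ on $[-L,L]$ with mesh $h_n\to0$, so it tends to $\int_{-L}^{L}F(x)\ud x$. Here uniform continuity of $F$ on $[-L-1,L+1]$ handles the endpoints, which contribute only $O(h_n)$.

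The tail is where the monotonicity of $H$ is used, and this is the only point needing care. For $j\ge1$ with $jh_n>L\ge L_0$, monotonicity gives
\[
h_nH(jh_n)\le\int_{(j-1)h_n}^{jh_n}H(x)\ud x .
\]
Summing over such $j$ gives at most $\int_{L-h_n}^\infty H\le\int_{L-1}^\infty H<\varepsilon$ for large $n$. The negative side is handled the same way using $H(-x)$. Hence the tail sum is bounded by $2\varepsilon$ in absolute value.

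The bound $|F|\le H$ follows pointwise. For fixed $x$, take $j_n$ with $j_nh_n\to x$; then $F_n(j_nh_n)-F(j_nh_n)\to0$ and $F(j_nh_n)\to F(x)$, while $|F_n(j_nh_n)|\le H(j_nh_n)\to H(x)$ by continuity of $H$. In particular $F$ is integrable, and $\int_{|x|>L}|F|<\varepsilon$. Combining the pieces gives
\[
\limsup_{n\to\infty}\Bigl|h_n\sum_{j\in I_n} F_n(jh_n)-\int_{\mathbb R} F\Bigr|\le 3\varepsilon,
\]
and letting $\varepsilon\to0$ proves the first claim.

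For the parametric version, the same estimates go through uniformly in $\theta$. The choice of $L$ depends only on $H$, and the tail bound is uniform because the same $H$ dominates for all $\theta$. The window error is uniform by the assumed uniform local convergence. The Riemann-sum error for $F_\theta$ on $[-L,L]$ is uniform because joint continuity on the compact set $[-L-1,L+1]\times\Theta$ gives uniform equicontinuity in $x$, with modulus independent of $\theta$, and a uniform bound. Finally, $|F_\theta|\le H$ for every $\theta$ by the pointwise argument above, so the integral tails are uniformly small as well.
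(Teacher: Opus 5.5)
Your proposal is correct and follows the paper's proof essentially step for step: truncation to $[-L,L]$, where uniform convergence plus ordinary Riemann sums give $\int_{-L}^{L}F$; tails controlled by comparing $h_nH(jh_n)$ with integrals of $H$ via monotonicity; the pointwise bound $|F|\le H$ from the local convergence; and uniform equicontinuity on $[-L-1,L+1]\times\Theta$ for the parametric case. Your left-endpoint comparison $h_nH(jh_n)\le\int_{(j-1)h_n}^{jh_n}H$ is slightly cleaner than the paper's ``endpoint term'' remark, but you should take $L\ge L_0+1$ rather than $L\ge L_0$, so that for $h_n\le1$ every interval $[(j-1)h_n,jh_n]$ with $jh_n>L$ lies where $H$ is non-increasing.
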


\begin{proof}
For fixed $L$, the local uniform convergence and the ordinary Riemann-sum
convergence give
\[
h_n\sum_{\substack{j\in I_n\\ |j h_n|\le L}}F_n(j h_n)
\longrightarrow
\int_{-L}^{L}F(x)\ud x.
\]
It remains to control the sum outside a fixed interval. By the assumptions on
$H$,
\[
\sup_{0<h\le1}h\sum_{|j| h>L}H(j h)
\longrightarrow0
\qquad (L\to\infty).
\]
Indeed, for $L$ beyond the monotonicity threshold, the right-hand sum is bounded
by $\int_{L-1}^{\infty}H(x)\ud x$, up to an endpoint term which is absorbed by
the same integral after increasing $L$; the left-hand sum follows by applying
the same argument to $x\mapsto H(-x)$. The local convergence and continuity
of $F$ imply $|F(x)|\le H(x)$ for every $x\in\mathbb R$, so the outer
integrals of $F$ are dominated by those of $H$. Letting first $n\to\infty$ and then $L\to\infty$
proves the first assertion.

For the parameter-dependent assertion, joint continuity makes the family
$\{F_\theta:\theta\in\Theta\}$ uniformly equicontinuous on every compact
interval. Hence the ordinary Riemann sums of $F_\theta$ over $[-L,L]$
converge uniformly in $\theta$. The local approximation by $F_{n,\theta}$ is
uniform by assumption. The same grid approximation gives
$|F_\theta(x)|\le H(x)$ for every $(x,\theta)\in\mathbb R\times\Theta$,
and the preceding tail estimate is uniform because the same $H$ applies to all
parameter values. This proves the second assertion.
\end{proof}

We now sum the coefficients and compute moments.

\begin{theorem}\label{thm:real-partition}
Let $z>0$, let $\gamma\in\mathbb R$, let $n$ be even, write
$\kappa=n/2$ and use the definitions of \eqref{def:cspz}. Put
\[
r=e^{-z/n},
\qquad
u:=\frac{A(z)}2+\frac{\gamma}{\sqrt n},
\qquad
p=e^{-u/n},
\qquad
q=p\, r=e^{-(u+z)/n},
\]
and define
\begin{equation}\label{eq:Im-def}
  \mu_m(z,\gamma):=
  \int\limits_{-\infty}^{\infty}
  x^m\exp\!\left(\gamma x^2-\frac{B(z)}{12}x^4\right)\ud x.
\end{equation}
Then
\begin{equation}\label{eq:S-quartic-central}
  S_n
  =
  C_{n,\kappa}\,n^{3/4} \mu_0(z,\gamma)(1+o(1)),
\end{equation}
and hence
\begin{equation}\label{eq:S-quartic-absolute}
  S_n
  =
  d(z)\mu_0(z,\gamma)n^{1/4}
  \exp\!\left(n\mathcal H(z)-\frac{u n}{4}\right)(1+o(1)).
\end{equation}
Moreover, for every integer $m\ge0$,
\begin{equation}\label{eq:quartic-moments}
  \frac{1}{S_n}\sum_{k=0}^{n}
  \left(\frac{k-\kappa}{n^{3/4}}\right)^m C_{n,k}
  \longrightarrow
  \frac{\mu_m(z,\gamma)}{\mu_0(z,\gamma)}.
\end{equation}
For odd $m$, the left-hand side is already zero by symmetry.
\end{theorem}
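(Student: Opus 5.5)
The plan is to apply Lemma~\ref{lem:dominated-riemann} with $h_n:=n^{-3/4}$, $I_n:=\{-\kappa,\dots,\kappa\}$ and, for fixed $m\ge0$,
\[
F_n(x):=x^m\frac{C_{n,\kappa+\ell}}{C_{n,\kappa}},\qquad x=\ell h_n,
\qquad
F(x):=x^m\exp\!\left(\gamma x^2-\frac{B(z)}{12}x^4\right).
\]
Since $\kappa h_n=n^{1/4}/2\to\infty$, the interval hypotheses hold. Writing $k=\kappa+\ell$, we have
\[
\sum_{k=0}^n\left(\frac{k-\kappa}{n^{3/4}}\right)^mC_{n,k}
=C_{n,\kappa}\,n^{3/4}\,h_n\sum_{\ell\in I_n}F_n(\ell h_n).
\]
So the lemma will give
\[
\sum_k\left(\frac{k-\kappa}{n^{3/4}}\right)^mC_{n,k}=C_{n,\kappa}n^{3/4}\bigl(\mu_m(z,\gamma)+o(1)\bigr).
\]

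\emph{Local convergence.} For $|x|\le L$, Theorem~\ref{thm:coefficient-estimate} gives the ratio as $\exp(\gamma x^2-B(z)x^4/12+O(n^{-1/2}))$ uniformly in $x$. Multiplying by $x^m$, which is bounded by $L^m$, yields $\sup_{|x|\le L}|F_n-F|\to0$, because the exponent is bounded on $[-L,L]$.

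\emph{Domination.} By Proposition~\ref{prop:pq-reduction}, $C_{n,k}=p^{k(n-k)}\pqbinom{n}{k}{r}$, so $\log(C_{n,\kappa+\ell}/C_{n,\kappa})=g_n(\kappa+\ell)-g_n(\kappa)$. Lemma~\ref{lem:real-global-bound} then gives $|F_n(x)|\le H(x):=|x|^me^{C}\exp(|\gamma|x^2-cx^4)$. This $H$ is continuous, integrable and even. It is also eventually non-increasing on $[L_0,\infty)$: for large $x$ the derivative of its logarithm is $m/x+2|\gamma|x-4cx^3<0$. All hypotheses of Lemma~\ref{lem:dominated-riemann} are therefore met. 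I expect this step to be the only real obstacle, and it has already been handled by the uniform quartic bound.

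Taking $m=0$ gives \eqref{eq:S-quartic-central}, since $\mu_0>0$ makes the $o(1)$ relative. Inserting \eqref{eq:central-pq-asymp} from Lemma~\ref{lem:central-coefficient-real} gives $C_{n,\kappa}=d(z)n^{-1/2}\exp(n\mathcal H(z)-un/4)(1+O(n^{-1}))$. Here the coefficients are positive, so the absolute value is harmless, and $u$ here is the $n$-dependent one, which matches the statement. Multiplying by $n^{3/4}\mu_0$ yields \eqref{eq:S-quartic-absolute}. For general $m$, divide the $m$-th moment asymptotic by the $m=0$ one to get \eqref{eq:quartic-moments}. For odd $m$, the symmetry \eqref{eq:pq-symmetry} pairs $\ell$ with $-\ell$, so the sum vanishes, consistent with $\mu_m=0$ for odd $m$.
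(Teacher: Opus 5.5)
Your proposal is correct and follows essentially the same route as the paper: local convergence from Theorem~\ref{thm:coefficient-estimate}, domination from Lemma~\ref{lem:real-global-bound}, then Lemma~\ref{lem:dominated-riemann} with $h_n=n^{-3/4}$. As in the paper, the case $m=0$ combined with Lemma~\ref{lem:central-coefficient-real} gives the absolute formula, and dividing by the $m=0$ limit gives the moments. The only cosmetic difference is your dominating function $|x|^m e^{C}\exp(|\gamma|x^2-cx^4)$ in place of the paper's $C(1+|x|^m)\exp(|\gamma|x^2-cx^4)$; both are valid, and your $e^{C}$ correctly exponentiates the additive constant of Lemma~\ref{lem:real-global-bound}.
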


\begin{proof}
Since $p,q>0$, all coefficients are positive and
\[
C_{n,k}=p^{k(n-k)}\pqbinom{n}{k}{r}.
\]
Put $x_\ell:=\ell/n^{3/4}$ and
$I_n:=\{-\kappa,-\kappa+1,\ldots,\kappa\}$. By
Theorem~\ref{thm:coefficient-estimate}, for each fixed $L>0$, uniformly for
$|x_\ell|\le L$,
\[
\frac{C_{n,\kappa+\ell}}{C_{n,\kappa}}
=
\exp\!\left(
\gamma x_\ell^2-\frac{B(z)}{12}x_\ell^4+O_{z,\gamma,L}(n^{-1/2})
\right).
\]
By Lemma~\ref{lem:real-global-bound},
\[
\frac{C_{n,\kappa+\ell}}{C_{n,\kappa}}
\le
C\exp\!\left(|\gamma|x_\ell^2-cx_\ell^4\right),
\qquad |\ell|\le\kappa.
\]
For each fixed $m\ge0$, the function
\[
H_m(x):=C(1+|x|^m)\exp\!\left(|\gamma|x^2-cx^4\right)
\]
is integrable and eventually decreasing in the outward direction on both rays.
Lemma~\ref{lem:dominated-riemann}, with $h_n=n^{-3/4}$, gives
\[
\frac{1}{n^{3/4}C_{n,\kappa}}
\sum_{\ell=-\kappa}^{\kappa}
\left(\frac{\ell}{n^{3/4}}\right)^m C_{n,\kappa+\ell}
\longrightarrow \mu_m(z,\gamma).
\]
The case $m=0$ gives \eqref{eq:S-quartic-central}; division by the
$m=0$ limit gives \eqref{eq:quartic-moments}. Formula
\eqref{eq:S-quartic-absolute} follows from Lemma~\ref{lem:central-coefficient-real}.
\end{proof}

The coefficient asymptotic and the uniform bound also give the limit of the
centred generating series stated in the introduction.

\begin{theorem}\label{thm:generating-series-real}
Let $z>0$, let $\gamma\in\mathbb R$ and assume that $n$ is even with
$\kappa:=n/2$ and the definitions of \eqref{def:cspz}. Let
\[
r:=e^{-z/n},
\qquad
u:=\frac{A(z)}{2}+\frac{\gamma}{\sqrt n},
\qquad
p:=e^{-u/n},
\qquad
q:=p\, r=e^{-(u+z)/n}.
\]
Let $G_n$ be given by \eqref{eq:Gn-def}, with $C_{n,k}$ and
$\mathcal Z_n$ formed from these parameters. For $\zeta\in\mathbb C$, set
\begin{equation}\label{eq:I-generating-def}
  \mathcal I_{z,\gamma}(\zeta)
  :=
  \int\limits_{-\infty}^{\infty}
  \exp\!\left(\zeta x + \gamma x^2-\frac{B(z)}{12}x^4\right)\ud x.
\end{equation}

Then $\mathcal I_{z,\gamma}$ is entire. Moreover, for every compact set
$K\subset\mathbb C$,
\begin{equation}\label{eq:generating-series-limit}
\sup_{\zeta\in K}
\left|
G_n(\zeta)
-
\mathcal I_{z,\gamma}(\zeta)
\right|
\longrightarrow 0
\qquad (n\to\infty,\ n\ \text{even}).
\end{equation}
Consequently,
\begin{equation}\label{eq:generating-series-normalised-limit}
\sup_{\zeta\in K}
\left|
\frac{G_n(\zeta)}{G_n(0)}
-
\frac{\mathcal I_{z,\gamma}(\zeta)}{\mathcal I_{z,\gamma}(0)}
\right|
\longrightarrow 0.
\end{equation}
\end{theorem}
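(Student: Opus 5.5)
The plan is to write $G_n$ as a Riemann sum and apply the parameter-dependent form of Lemma~\ref{lem:dominated-riemann}, with $\Theta=K$ and $h_n=n^{-3/4}$. Since $n$ is even, $\mathcal Z_n(w)=\sum_{\ell=-\kappa}^{\kappa}C_{n,\kappa+\ell}w^{\ell}$. Hence, with $x_\ell=\ell/n^{3/4}$,
\[
G_n(\zeta)=n^{-3/4}\sum_{\ell=-\kappa}^{\kappa}e^{\zeta x_\ell}\,\frac{C_{n,\kappa+\ell}}{C_{n,\kappa}} .
\]
Accordingly I set $F_{n,\zeta}(x_\ell):=e^{\zeta x_\ell}C_{n,\kappa+\ell}/C_{n,\kappa}$ and $F_\zeta(x):=\exp(\zeta x+\gamma x^2-B(z)x^4/12)$. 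The map $(x,\zeta)\mapsto F_\zeta(x)$ is jointly continuous on $\mathbb R\times K$. Lemma~\ref{lem:dominated-riemann} requires complex-valued $F$, but the proof of that lemma goes through verbatim, or one can apply it to the real and imaginary parts separately, each dominated by the same $H$.

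First I dispose of entireness. For $|\zeta|\le R$ the integrand of $\mathcal I_{z,\gamma}$ is bounded by $\exp(R|x|+\gamma x^2-B(z)x^4/12)$, which is integrable because $B(z)>0$. The integrand is entire in $\zeta$, so holomorphy follows from Morera's theorem together with Fubini, or from differentiation under the integral sign.

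Next come the two hypotheses of the lemma. For \emph{local uniform convergence}, Theorem~\ref{thm:coefficient-estimate} gives, uniformly for $|x_\ell|\le L$, the estimate $C_{n,\kappa+\ell}/C_{n,\kappa}=F_0(x_\ell)e^{O(n^{-1/2})}$. Since $|e^{\zeta x}|\le e^{RL}$ on this range, $|F_{n,\zeta}-F_\zeta|\le e^{RL}F_0(x_\ell)\,|e^{O(n^{-1/2})}-1|\to0$ uniformly in $\zeta\in K$. For \emph{domination}, choose $R$ with $K\subset\{|\zeta|\le R\}$. Lemma~\ref{lem:real-global-bound} gives
\[
|F_{n,\zeta}(x_\ell)|\le H(x_\ell):=e^{C}\exp\!\left(R|x|+|\gamma|x^2-cx^4\right)\Big|_{x=x_\ell},
\]
valid for all $|\ell|\le\kappa$. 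This $H$ is even, continuous and integrable, and it is eventually decreasing on $[L_0,\infty)$ since the quartic term dominates. The interval conditions also hold, because $\kappa n^{-3/4}=n^{1/4}/2\to\infty$. The lemma then yields \eqref{eq:generating-series-limit}.

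The normalised statement \eqref{eq:generating-series-normalised-limit} follows once the denominator is controlled. The number $\mathcal I_{z,\gamma}(0)$ is the integral of a positive function, so it is positive. By the convergence just proved, $G_n(0)\to\mathcal I_{z,\gamma}(0)$, so $G_n(0)$ is bounded away from zero for large $n$. Combining this with the uniform boundedness of $\mathcal I_{z,\gamma}$ on $K$ and the elementary bound for $|a_n/b_n-a/b|$ gives the uniform convergence of the quotient. The genuinely hard input, the uniform quartic bound up to the endpoints, is already supplied by Lemma~\ref{lem:real-global-bound}. The only point needing care is that the factor $e^{\zeta x}$ grows only exponentially in $|x|$, so it can be absorbed into a single dominating $H$ valid for the whole compact $K$.
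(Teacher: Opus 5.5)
Your proposal is correct and follows essentially the same route as the paper. You rewrite $G_n$ as a Riemann sum with mesh $n^{-3/4}$, take the local uniform convergence from Theorem~\ref{thm:coefficient-estimate}, absorb $e^{\zeta x}$ into the single dominating function $e^{C}\exp(R|x|+|\gamma|x^2-cx^4)$ obtained from Lemma~\ref{lem:real-global-bound}, apply the parameter-uniform form of Lemma~\ref{lem:dominated-riemann} with $\Theta=K$, and then divide using $G_n(0)\to\mathcal I_{z,\gamma}(0)>0$. Your remarks on complex-valued $F$ and on the constant $e^{C}$ (where the paper writes $C$) only spell out details that the paper leaves implicit.
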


\begin{proof}
The integral in \eqref{eq:I-generating-def} is absolutely convergent for
every $\zeta\in\mathbb C$, because $B(z)>0$ and the term
$-B(z)x^4/12$ dominates $\gamma x^2+\Real(\zeta)x$ as $|x|\to\infty$.
The same estimate, uniformly on compact subsets of the $\zeta$-plane,
shows that $\mathcal I_{z,\gamma}$ is entire.

The parameters $p$ and $q$ are positive real numbers. Hence all
coefficients $C_{n,k}$ are positive. Let $K\subset\mathbb C$ be compact,
and choose $M>0$ such that $|\zeta|\le M$ for $\zeta\in K$. Put
$x_\ell:=\ell/n^{3/4}$. Then
\[
G_n(\zeta)
=
\frac{1}{n^{3/4}}
\sum_{\ell=-\kappa}^{\kappa}
\frac{C_{n,\kappa+\ell}}{C_{n,\kappa}}e^{\zeta x_\ell}.
\]
For each fixed $L>0$, Theorem~\ref{thm:coefficient-estimate}
gives, uniformly for $|x_\ell|\le L$ and $\zeta\in K$,
\[
\frac{C_{n,\kappa+\ell}}{C_{n,\kappa}}e^{\zeta x_\ell}
=
\exp\!\left(
\zeta x_\ell + \gamma x_\ell^2-\frac{B(z)}{12}x_\ell^4
+O_{z,\gamma,L}(n^{-1/2})
\right).
\]
By Lemma~\ref{lem:real-global-bound},
\[
\frac{C_{n,\kappa+\ell}}{C_{n,\kappa}}
\le
C\exp\!\left(|\gamma|x_\ell^2-cx_\ell^4\right),
\qquad |\ell|\le\kappa.
\]
Consequently, uniformly for $\zeta\in K$,
\[
\left|
\frac{C_{n,\kappa+\ell}}{C_{n,\kappa}}e^{\zeta x_\ell}
\right|
\le
C\exp\!\left(M|x_\ell|+|\gamma|x_\ell^2-cx_\ell^4\right).
\]
The function on the right is integrable, and both it and its reflection are
non-increasing on a sufficiently far right-hand tail. The map
\[
(x,\zeta)\longmapsto
\exp\!\left(\zeta x + \gamma x^2-\frac{B(z)}{12}x^4\right)
\]
is jointly continuous on $\mathbb R\times K$, and hence uniformly continuous
on $[-L,L]\times K$ for every $L>0$. Thus all hypotheses of the uniform form
of Lemma~\ref{lem:dominated-riemann} hold with $h_n=n^{-3/4}$, and the
lemma gives \eqref{eq:generating-series-limit}.
Taking $\zeta=0$ in \eqref{eq:generating-series-limit} gives
\[
G_n(0)
\longrightarrow
\mathcal I_{z,\gamma}(0)>0,
\]
and division gives \eqref{eq:generating-series-normalised-limit}.
\end{proof}

For positive real coefficients, the normalised generating series is the
Fourier--Laplace transform of a probability measure. The next corollary
gives the resulting weak convergence.

\begin{corollary}\label{cor:fourier-laplace-real}
Keep the assumptions of Theorem~\ref{thm:generating-series-real}. Define
probability measures on $\mathbb R$ by
\begin{equation}\label{eq:nu-n-def}
\nu_n:=
\frac{1}{\mathcal Z_n(1)}
\sum_{\ell=-\kappa}^{\kappa}C_{n,\kappa+\ell}\,
\delta_{\ell/n^{3/4}}.
\end{equation}
Here $\delta_x$ denotes the unit point mass at $x$. The limiting measure is
\begin{equation}\label{eq:nu-limit-def}
d\nu_{z,\gamma}(x):=
\frac{1}{\mathcal I_{z,\gamma}(0)}
\exp\!\left(\gamma x^2-\frac{B(z)}{12}x^4\right)\ud x.
\end{equation}
Here $\nu_n$ is a probability measure because $p,q>0$, and
$\nu_{z,\gamma}$ is a probability measure because $B(z)>0$ and
$\mathcal I_{z,\gamma}(0)$ is finite and positive. Then, for every compact
set $\Omega\subset\mathbb C$,
\begin{equation}\label{eq:fourier-laplace-measure-limit}
\sup_{\zeta\in \Omega}
\left|
\int_{\mathbb R}e^{\zeta x}\,d\nu_n(x)
-
\int_{\mathbb R}e^{\zeta x}\,d\nu_{z,\gamma}(x)
\right|
\longrightarrow0.
\end{equation}
In particular, for every $s\in\mathbb R$,
\begin{equation}\label{eq:characteristic-limit}
\int_{\mathbb R}e^{isx}\,d\nu_n(x)
\longrightarrow
\int_{\mathbb R}e^{isx}\,d\nu_{z,\gamma}(x).
\end{equation}
Consequently, $\nu_n$ converges weakly to $\nu_{z,\gamma}$.
Equivalently, if $K_n$ has law
$\mathbb P_n(K_n=k)=C_{n,k}/\mathcal Z_n(1)$, if $\mathbb E_n$ denotes
expectation with respect to $\mathbb P_n$, and if $M_n=n-2K_n$, then
\begin{equation}\label{eq:magnetisation-transform-limit}
\mathbb E_n\exp\!\left(\tau\frac{M_n}{n^{3/4}}\right)
\longrightarrow
\frac{\mathcal I_{z,\gamma}(-2\tau)}{\mathcal I_{z,\gamma}(0)}
\end{equation}
locally uniformly for $\tau\in\mathbb C$.
\end{corollary}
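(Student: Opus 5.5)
The plan is to read everything off Theorem~\ref{thm:generating-series-real}. First, the Fourier--Laplace transform of $\nu_n$ is a normalised generating series:
\[
\int_{\mathbb R}e^{\zeta x}\,d\nu_n(x)
=\frac{1}{\mathcal Z_n(1)}\sum_{\ell=-\kappa}^{\kappa}C_{n,\kappa+\ell}e^{\zeta\ell/n^{3/4}}
=\frac{\mathcal Z_n(e^{\zeta/n^{3/4}})}{\mathcal Z_n(1)}
=\frac{G_n(\zeta)}{G_n(0)} .
\]
The corresponding identity for the limit is $\int e^{\zeta x}\,d\nu_{z,\gamma}(x)=\mathcal I_{z,\gamma}(\zeta)/\mathcal I_{z,\gamma}(0)$, which is immediate from \eqref{eq:nu-limit-def}. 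Hence \eqref{eq:fourier-laplace-measure-limit} is exactly \eqref{eq:generating-series-normalised-limit} with $K=\Omega$.

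The positivity claims come first. Since $p,q>0$, each $C_{n,k}>0$, so $\nu_n$ is a probability measure. Since $B(z)>0$, the density in \eqref{eq:nu-limit-def} is positive and integrable, so $\mathcal I_{z,\gamma}(0)\in(0,\infty)$. Taking $\Omega=\{is\}$ gives \eqref{eq:characteristic-limit}. Weak convergence then follows from L\'evy's continuity theorem, because the limit characteristic function is that of the probability measure $\nu_{z,\gamma}$ and is therefore continuous at $0$.

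Alternatively, one can use real $\zeta$ in a neighbourhood of $0$. Convergence of the moment generating functions there also yields weak convergence, and it additionally gives the tightness directly.

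For \eqref{eq:magnetisation-transform-limit}, write $M_n=n-2K_n=-2(K_n-\kappa)$. Then
\[
\mathbb E_n e^{\tau M_n/n^{3/4}}=\int e^{-2\tau x}\,d\nu_n(x).
\]
Applying \eqref{eq:fourier-laplace-measure-limit} with $\zeta=-2\tau$ gives the claim. It is locally uniform in $\tau$ because $\tau\mapsto-2\tau$ maps compact sets to compact sets.

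There is no real obstacle; everything is bookkeeping once Theorem~\ref{thm:generating-series-real} is available. The only point requiring care is the identification $\mathcal Z_n(1)=S_n=n^{3/4}C_{n,\kappa}G_n(0)$, together with the fact that $\mathcal I_{z,\gamma}(0)>0$. This positivity is what makes the division legitimate and keeps it uniform on $\Omega$.
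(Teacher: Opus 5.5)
Your proposal is correct and follows the paper's proof essentially step for step. You identify $\int e^{\zeta x}\,d\nu_n$ with $G_n(\zeta)/G_n(0)$, so that the Fourier--Laplace limit is exactly the normalised limit in Theorem~\ref{thm:generating-series-real}; you then take $\zeta=is$ with the continuity theorem for characteristic functions, and substitute $\zeta=-2\tau$ using $M_n=-2(K_n-\kappa)$. Your side remark that convergence of the real moment generating functions near $0$ would also give weak convergence is valid, but the paper does not need it.
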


\begin{proof}
By the definition of $\nu_n$ and $\mathcal Z_n$,
\[
\int_{\mathbb R}e^{\zeta x}\,d\nu_n(x)
=
\frac{\mathcal Z_n(e^{\zeta/n^{3/4}})}{\mathcal Z_n(1)}
=
\frac{G_n(\zeta)}{G_n(0)}.
\]
By the definition of $\nu_{z,\gamma}$,
\[
\int_{\mathbb R}e^{\zeta x}\,d\nu_{z,\gamma}(x)
=
\frac{\mathcal I_{z,\gamma}(\zeta)}{\mathcal I_{z,\gamma}(0)}.
\]
Thus \eqref{eq:fourier-laplace-measure-limit} is exactly
\eqref{eq:generating-series-normalised-limit}. Taking $\zeta=is$ gives
\eqref{eq:characteristic-limit}. The limiting characteristic function is
continuous at the origin, and the weak convergence assertion follows from
the continuity theorem for characteristic functions \cite[Thm.~3.6.1]{Lukacs1970};
we use weak convergence in the standard sense of \cite[Ch.~1]{Billingsley1999}.
Finally, $M_n=-2(K_n-\kappa)$, so
\eqref{eq:magnetisation-transform-limit} follows from
\eqref{eq:fourier-laplace-measure-limit} with $\zeta=-2\tau$.
\end{proof}

Finally, the locally uniform convergence of the generating series implies
convergence of zeros near $w=1$ by Rouch\'e's theorem.

\begin{corollary}\label{cor:nearby-zeros-real}
  Keep the assumptions of Theorem~\ref{thm:generating-series-real} and recall
  the definitions of \eqref{def:cspz}. If $U\subset\mathbb C$ is a bounded open set whose boundary contains
  no zero of $\mathcal I_{z,\gamma}$, then, for all sufficiently large
  even $n$, the functions
  \[
  \zeta\longmapsto \mathcal Z_n(e^{\zeta/n^{3/4}})
  \quad\text{and}\quad
  \zeta\longmapsto \mathcal I_{z,\gamma}(\zeta)
  \]
  have the same number of zeros in $U$, counted with multiplicity.

  The local coordinate near $w=1$ is $\zeta=n^{3/4}\log w$; on the unit circle
  this becomes $w=e^{is/n^{3/4}}$. In particular, define
  \begin{equation}\label{eq:quartic-fourier-transform-real}
    \widehat{\nu}_{z,\gamma}(s)
    :=
    \frac{\mathcal I_{z,\gamma}(is)}{\mathcal I_{z,\gamma}(0)}
    =
    \frac{\displaystyle
      \int\limits_{-\infty}^{\infty}
      \exp\!\left(\gamma x^2-\frac{B(z)}{12}x^4\right)e^{isx}\ud x
    }{\displaystyle
      \int\limits_{-\infty}^{\infty}
      \exp\!\left(\gamma x^2-\frac{B(z)}{12}x^4\right)\ud x
    }.
  \end{equation}
  If $s_0\in\mathbb R$ is a simple zero of the real function
  $s\mapsto\widehat{\nu}_{z,\gamma}(s)$, then there are real numbers
  $s_n\to s_0$ such that
  \begin{equation}\label{eq:local-unit-zero-real}
    P_n(e^{is_n/n^{3/4}})=0.
  \end{equation}
  Consequently also $e^{-is_n/n^{3/4}}$ is a zero of
  $P_n$. Equivalently, a simple real zero of the limiting
  characteristic function produces a sequence of unit-circle zeros of
  the finite polynomials whose rescaled logarithms converge to $is_0$.
\end{corollary}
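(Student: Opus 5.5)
The plan is to deduce everything from the locally uniform convergence in Theorem~\ref{thm:generating-series-real} via Hurwitz/Rouch\'e, and then to use the reciprocal symmetry of Proposition~\ref{prop:pq-reduction} to force the nearby zeros onto the unit circle.

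\emph{Counting zeros in $U$.} Since $\overline U$ is compact and $\mathcal I_{z,\gamma}$ is entire and nonvanishing on the compact set $\partial U$, the quantity $\delta:=\min_{\partial U}|\mathcal I_{z,\gamma}|$ is positive. Apply \eqref{eq:generating-series-limit} with $K=\overline U$. For all large even $n$ it gives $|G_n-\mathcal I_{z,\gamma}|<\delta\le|\mathcal I_{z,\gamma}|$ on $\partial U$. The function $G_n$ is holomorphic in $\zeta$, being a finite sum of exponentials $e^{\zeta x_\ell}$. Rouch\'e's theorem, applied on each component of $U$ (only finitely many components meet the zero set of $\mathcal I_{z,\gamma}$ or of $G_n$ in $\overline U$, and the others contribute nothing on either side), shows that $G_n$ and $\mathcal I_{z,\gamma}$ have the same number of zeros in $U$ with multiplicity. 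Finally, $G_n$ differs from $\zeta\mapsto\mathcal Z_n(e^{\zeta/n^{3/4}})$ only by the positive constant $C_{n,\kappa}n^{3/4}$, so the zero counts agree.

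\emph{Unit-circle zeros.} Let $s_0$ be a simple real zero of $\widehat\nu_{z,\gamma}$. Because the density in \eqref{eq:nu-limit-def} is even, $s\mapsto\mathcal I_{z,\gamma}(is)$ is real-valued. Likewise $s\mapsto\mathcal Z_n(e^{is/n^{3/4}})$ is real for real $s$. Indeed, the coefficients are real and \eqref{eq:self-recip} gives $\mathcal Z_n(\bar w)=\overline{\mathcal Z_n(w)}$ together with $\mathcal Z_n(w^{-1})=\mathcal Z_n(w)$, so on $|w|=1$ we have $\overline{\mathcal Z_n(w)}=\mathcal Z_n(\bar w)=\mathcal Z_n(w^{-1})=\mathcal Z_n(w)$.

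The direct route is then a real intermediate value argument. Simplicity of the zero means the derivative at $s_0$ is nonzero, so for small $\varepsilon>0$ the real function $\mathcal I_{z,\gamma}(is)$ takes opposite signs at $s_0\pm\varepsilon$. Local uniform convergence along the segment $i[s_0-\varepsilon,s_0+\varepsilon]$ transfers these signs to $G_n(is)$ for large $n$. The intermediate value theorem then produces a real $s_n\in(s_0-\varepsilon,s_0+\varepsilon)$ with $G_n(is_n)=0$. Letting $\varepsilon\downarrow0$ along a sequence and using a diagonal choice gives $s_n\to s_0$.

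Since $\mathcal Z_n(w)=w^{-\kappa}P_n(w)$ and $w\ne0$, this yields $P_n(e^{is_n/n^{3/4}})=0$. The reciprocal pairing in Proposition~\ref{prop:pq-reduction} then gives the zero at $e^{-is_n/n^{3/4}}$.

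\emph{Main subtlety.} The only delicate point is locating the zero on the real $s$-line rather than merely nearby in $\mathbb C$. The first part alone only says that a small disc around $is_0$ contains exactly one zero. The symmetry argument above settles this, through realness on the circle plus the intermediate value theorem. An alternative is to note that the unique zero in the disc must be fixed by the reflection $\zeta\mapsto-\bar\zeta$, which preserves zeros because $\mathcal Z_n(e^{-\bar\zeta/n^{3/4}})=\overline{\mathcal Z_n(e^{\zeta/n^{3/4}})}$, so it lies on the imaginary axis.
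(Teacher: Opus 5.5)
Your proposal is correct and follows the paper's proof. Both arguments apply Rouch\'e's theorem using the locally uniform convergence of $G_n$, then combine realness on the imaginary axis, the sign change at a simple zero and the intermediate value theorem, and use the reciprocal pairing to get $e^{-is_n/n^{3/4}}$. The only cosmetic differences are these: the paper applies Rouch\'e on small discs around the zeros of $\mathcal I_{z,\gamma}$ and shows $G_n$ is zero-free on the rest of $\overline U$, which avoids needing Rouch\'e for an arbitrary boundary $\partial U$; and it gets realness from the cosine-sum form of $\mathcal Z_n(e^{is/n^{3/4}})$ rather than from conjugation and self-reciprocity.
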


\begin{proof}
  By Theorem~\ref{thm:generating-series-real},
  \[
  G_n(\zeta)
  \longrightarrow
  \mathcal I_{z,\gamma}(\zeta)
  \]
  uniformly on compact subsets of $\mathbb C$. Since
  $C_{n,\kappa}n^{3/4}\ne0$, this scalar factor does not change
  zeros. Thus the zeros of $G_n$ are exactly the zeros of
  $\zeta\mapsto\mathcal Z_n(e^{\zeta/n^{3/4}})$. The zeros of
  $\mathcal I_{z,\gamma}$ in $\overline U$ are finite and, by
  assumption, lie in $U$. Choose pairwise disjoint closed discs
  contained in $U$, centred at these zeros, with boundaries containing
  no zero of $\mathcal I_{z,\gamma}$. By Rouch\'e's theorem, 
  $G_n$ and
  $\mathcal I_{z,\gamma}$ have the same number of zeros in each of
  these discs for all sufficiently large $n$. On the compact set
  obtained from $\overline U$ by removing the interiors of the discs,
  the function $\mathcal I_{z,\gamma}$ is bounded away from zero;
  uniform convergence gives that $G_n$ has no zero there for all
  sufficiently large $n$. This proves the first assertion.

  For the last assertion, note that $\widehat{\nu}_{z,\gamma}$ is
  real-valued on the real line, because the density in
  \eqref{eq:quartic-fourier-transform-real} is even. If $s_0$ is a
  simple real zero, then the real function
  $s\mapsto\widehat{\nu}_{z,\gamma}(s)$ changes sign in every
  sufficiently small interval around $s_0$. By
  \eqref{eq:generating-series-normalised-limit},
  \[
  s\mapsto
  \frac{\mathcal Z_n(e^{is/n^{3/4}})}{\mathcal Z_n(1)}
  \]
  converges uniformly to $\widehat{\nu}_{z,\gamma}(s)$ on compact real
  intervals.  The coefficient symmetry gives, for real $s$,
  \[
  \frac{\mathcal Z_n(e^{is/n^{3/4}})}{\mathcal Z_n(1)}
  =
  \frac{1}{\mathcal Z_n(1)}
  \left(
  C_{n,\kappa}
  +2\sum_{\ell=1}^{\kappa}C_{n,\kappa+\ell}
  \cos\!\left(\frac{s\ell}{n^{3/4}}\right)
  \right),
  \]
  and hence this function is real-valued. It therefore has a zero
  $s_n$ with $s_n\to s_0$. Since $\mathcal Z_n(w)=w^{-\kappa}P_n(w)$
  and $e^{is_n/n^{3/4}}\ne0$, this gives
  \eqref{eq:local-unit-zero-real}. The last statement follows from
  Proposition~\ref{prop:pq-reduction}.
\end{proof}

\begin{figure}[tbp]
  \begin{minipage}{0.48\textwidth}
    \centering
    \includegraphics[width=0.95\textwidth]{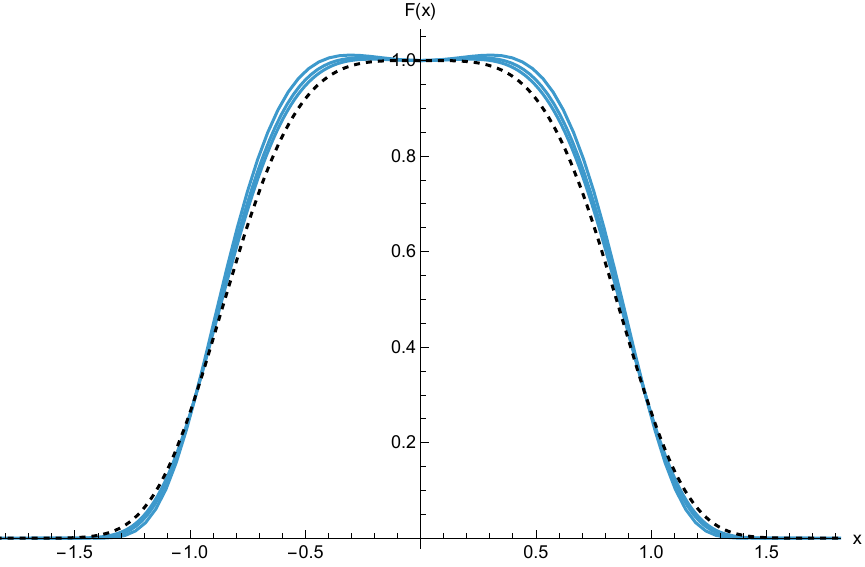}
  \end{minipage}%
  \begin{minipage}{0.48\textwidth}
    \centering
    \includegraphics[width=0.95\textwidth]{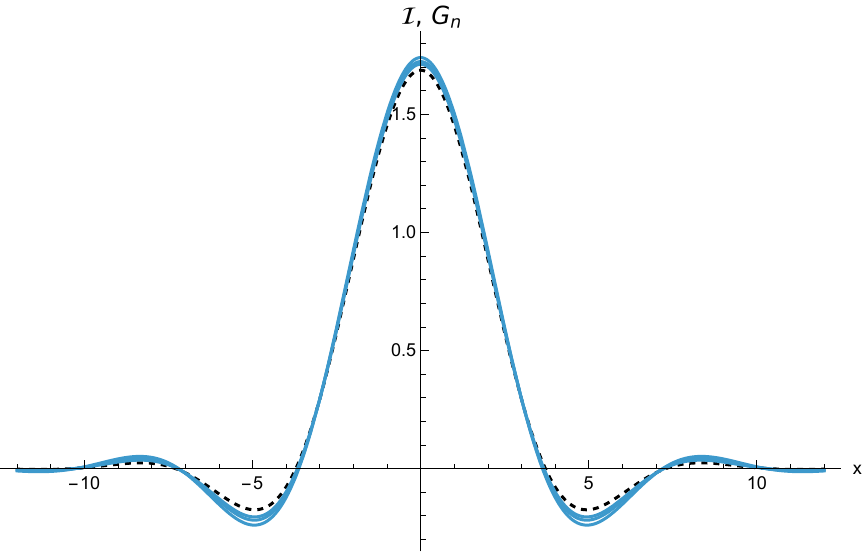}
  \end{minipage}
  \caption{Left: shows Eq.~\eqref{eq:quartic-coefficient-estimate} for
    $z=1$ and $\gamma=0$, ratios for $n=64,128,256$ (blue curves) and
    the limit $\exp(-1.333x^4)$ (dashed black curve).  Right: shows
    Eq.~\eqref{eq:intro-main-transform} on the imaginary axis,
    $G_n(ix)=\mathcal Z_n(e^{ix/n^{3/4}})/(C_{n,\kappa}n^{3/4})$ for
    $n=64,128,256$ (blue curves) and the limit
    $\mathcal I_{z,\gamma}(ix)$ (dashed black curve), for $z=1$ and
    $\gamma=0$.}
  \label{figs}
\end{figure}

\section{Conclusion and future work}\label{sec:conclusion}

We proved a quartic limit for centred $(p,q)$-Rogers--Szeg\H{o}
polynomials with positive real parameters.  The near-central
coefficient estimate applies to indices whose distance from $n/2$ is
of order $n^{3/4}$.  Together with the uniform quartic bound and
dominated Riemann sums, it gives locally uniform Fourier--Laplace
convergence.  We also obtained an asymptotic formula for the sum of
the coefficients, convergence of all rescaled moments, weak
convergence of the normalised coefficient measures, and convergence
with multiplicity of the zeros for which $\zeta=n^{3/4}\log w$ remains
bounded.

The first open problem is to study the zeros of the limiting entire function
\[
\mathcal I_{z,\gamma}(\zeta)=
\int\limits_{-\infty}^{\infty}
\exp\!\left(\zeta x + \gamma x^2-\frac{B(z)}{12}x^4\right)\ud x
\]
as functions of $z$ and $\gamma$.  In particular, the real zeros of
$s\mapsto\mathcal I_{z,\gamma}(is)$, their multiplicities, and their
behaviour for large $|s|$ would give more precise information on the
zeros near $w=1$ of the finite polynomials.  The result proved here is
local: it concerns zeros for which $\zeta=n^{3/4}\log w$ remains
bounded.  A description of zeros outside this neighbourhood of $w=1$
would require estimates when $|\zeta|$ grows with $n$.

The second open problem is to compare these zeros near $w=1$ with
Lee--Yang zeros of finite high-dimensional Ising magnetisation
polynomials.  Such a result would require proving, after the
corresponding normalisation, locally uniform convergence of the
centred Ising magnetisation polynomials at $w=e^{\zeta/n^{3/4}}$ to
the same Fourier--Laplace integral.  This could follow either from
direct comparison with $\mathcal Z_n$ or from coefficient estimates
and a uniform bound sufficient for the Riemann-sum argument.
Agreement of finitely many moments or fitted distributions does not by
itself imply zero convergence.

\section*{Acknowledgments}
The first-named author was partially funded by the Swedish Research
Council under grant agreement no.~2025-05053 and by the Swedish Energy Agency
under project no.~P2025-04323. The second-named author was partially
funded by the National Science Centre, Poland, under the Weave-UNISONO
programme, grant no.~UMO-2025/07/Y/ST1/00146.

\end{document}